\documentclass[12pt,fleqn]{amsart}
\usepackage[all]{xy}
\usepackage{tikz-cd}
\usepackage{amscd}
\usepackage{color}
\usepackage{enumerate}
\usepackage{amssymb}
\newtheorem{theorem}{Theorem}[section]
\newtheorem{lemma}[theorem]{Lemma}
\newtheorem{proposition}[theorem]{Proposition}
\newtheorem{problem}[theorem]{Problem}

\theoremstyle{definition}
\newtheorem{definition}[theorem]{Definition}

\theoremstyle{construction}

\theoremstyle{remark}

\numberwithin{equation}{section}

\newcommand{\Ext}{\operatorname{Ext}}

\newcommand{\nnorm}[1]{{\left\vert\kern-0.25ex\left\vert\kern-0.25ex\left\vert #1
		\right\vert\kern-0.25ex\right\vert\kern-0.25ex\right\vert}}
		
\newcommand{\bA}{\mathbb A}
\newcommand{\cA}{\mathcal A}
\newcommand{\cB}{\mathcal B}
\newcommand{\cC}{\mathcal C}
\newcommand{\cP}{\mathcal P}

\newcommand{\cN}{\mathcal N}

\newcommand{\K}{\mathcal K}

\newcommand{\fB}{\mathfrak B}
\newcommand{\fC}{\mathfrak C}

\newcommand{\ult}{\protect{ult}}
\newcommand{\clop}{\protect{\rm clop}}
\newcommand{\la}{\langle}
\newcommand{\ra}{\rangle}
\newcommand{\sub}{\subseteq}
\newcommand{\eps}{\varepsilon}

\newcommand{\sm}{\setminus}
\newcommand{\con}{\mathfrak c}

\newcommand{\wt}{\widetilde}
\newcommand{\vf}{\varphi}
\renewcommand{\Bar}{\operatorname{Bar}}
\newcommand{\supp}{\operatorname{supp}}

\newcommand{\pe}{\mathfrak p}

\newcommand{\ps}{\mathcal P}
\newcommand{\ml}{M^{\ell_1}}

\begin{document}

%\showlabels{cite}

\baselineskip=17pt

 \title{On the three space property for $C(K)$-spaces}
% \title{Properties of twisted sums of $C(K)$-spaces}

\author[G.\ Plebanek]{Grzegorz Plebanek}
\address{Instytut Matematyczny\\ Uniwersytet Wroc\l awski\\ Pl.\ Grunwaldzki 2/4\\
50-384 Wroc\-\l aw\\ Poland} \email{grzes@math.uni.wroc.pl}

\author[A. Salguero Alarc\'on]{Alberto Salguero Alarc\'on}
\address{Instituto de Matem\'aticas\\ Universidad de Extremadura\\
Avenida de Elvas\\ 06071-Badajoz\\ Spain} \email{salgueroalarcon@unex.es}

\thanks{
The first author has been supported by the grant 2018/29/B/ST1/00223 from National Science Centre, Poland.
The second author benefits from a grant associated to Project IB16056 (Junta de Extremadura, Spain) and from the grant FPU18/990 (Ministerio de Ciencia, Innovaci\'on y Universidades, Spain).}

\subjclass[2010]{46E15, 03E50, 54G12}

\begin{abstract}
Assuming $\pe=\con$, we show that for every Eberlein compact space $L$ of weight $\con$ there exists a short exact sequence $0\to c_0\to X\to C(L)\to 0$, where the Banach space $X$ is not isomorphic to a $C(K)$-space.
\end{abstract}

\date{}
\maketitle

%{\bf Later \ldots}
%A few words on Benyamini construction from 1977\ldots
%Intro on exact sequences\ldots References to previous papers, including our recent preprint and previous paper of Castillo, Aviles, Marciszewski Plebanek etc.
\section{Introduction}
The reader is referred to the Preliminaries section for all unexplained terminology.
This paper focuses on solving  a problem that Castillo has repeatedly posed \cite{efe,v,Ca16,2132}: must every twisted sum of $c_0(\aleph)$ and $c_0(\aleph')$  be isomorphic to a $C(K)$-space? The problem stems from an idea of Cabello  mentioned in \cite{3space} --- that a twisted sum of two $C(K)$-spaces does not need to be a $C(K)$-space itself. The proof was sketched in \cite{3space} and, in all its details, in \cite{2132}.
The argument  uses a clever construction of Benyamini \cite{ben}, a renorming $\|\cdot\|_n$ of $\ell_\infty$ such that $(\ell_\infty, \|\cdot\|_n)$ cannot be
$n$-complemented in any $C(K)$-space. This eventually results in an exact sequence

\begin{center}
 \begin{tikzcd}
	0 \arrow[r] & c_0 \arrow[r] & X \arrow[r]  & c_0(\ell_\infty/c_0) \arrow[r] & 0
\end{tikzcd}
\end{center}
in which $X$ cannot be a complemented subspace of a $C(K)$-space.

Benyamini's construction uses in an essential way properties of the compact space $\beta\omega\setminus \omega$. Its complexity suggests that such construction cannot be carried out with simpler compacta; for instance:
\begin{problem}\label{problem} Given any cardinal number $\aleph$, is every twisted sum space $X$ in
\begin{center} \begin{tikzcd}
	0 \arrow[r] & c_0 \arrow[r] & X \arrow[r]  & c_0(\aleph) \arrow[r] & 0,
\end{tikzcd} \end{center}
isomorphic to a $C(K)$-space?\end{problem}

We present here a relatively consistent negative solution to Problem \ref{problem}:
Assuming $\pe=\con$, a weak version of Martin's axiom, we prove that every Eberlein compact  space $K$ of weight $\con$  admits an exact sequence $ 0 \to c_0 \to   X \to C(K) \to 0$ where $X$ is not isomorphic to a $C(K)$-space. Recall that for any $\aleph$, the space $c_0(\aleph)$ is isomorphic to $C(\bA(\aleph))$, the space of continuous
functions on the Eberlein compact space $\bA(\aleph)$, the one-point compactification of a discrete set of size $\aleph$. Hence, our main result provides the negative answer to
Problem \ref{problem} for $\aleph=\con$.

\section{Preliminaries}
\subsection{Twisted sums of Banach spaces}
\par A \emph{short exact sequence} of Banach spaces is a diagram of Banach spaces and (continous linear) operators
\begin{center}\begin{tikzcd}
	0 \arrow[r] & Y \arrow[r, "j"] & X \arrow[r, "p"]  & Z \arrow[r] & 0
	\end{tikzcd} \end{center}
where the kernel of each arrow coincides with the image of the preceding one. We also say that $X$ is a \emph{twisted sum} of $Y$ and $Z$. In this setting, the open mapping theorem yields that $Y$ is isomorphic to a subspace of $X$ and $Z$ to the quotient $X/Y$. Two exact sequences $0\to Y \to X_i \to Z\to 0$, $i=1,2$, are \emph{equivalent} if there is an operator $u:X_1 \to X_2$ making commutative the following diagram
\begin{center}\begin{tikzcd}
0 \arrow[r] & Y \arrow[r] \arrow[d,equal] & X_1 \arrow[r] \arrow[d, "u"] & Z \arrow[r] \arrow[d,equal] & 0 \\
0 \arrow[r] & Y \arrow[r]           & X_2 \arrow[r]                & Z \arrow[r]           & 0
\end{tikzcd}\end{center}
Again, the open mapping theorem makes $u$ an isomorphism.
We say an exact sequence is \emph{trivial}, or that it splits, if the operator $p$ has a right-inverse $r:Z\to X$. In other words, when it is equivalent to the twisted sum $0\to Y \to Y\oplus Z\to Z\to 0$. When this happens, $X$ must be of course isomorphic to $Y\oplus Z$, but the converse is not true. Let us write $\Ext(Z,Y)$ to represent the space of twisted sums of $Y$ and $Z$, modulo equivalence. Hence $\Ext(Z,Y)=0$ means that the only twisted sum of $Y$ and $Z$ is the trivial one.

\par Two basic constructions regarding exact sequences are the \emph{pull-back} and the \emph{push-out}. Given a short exact sequence $0\to Y \overset i \to X \overset p \to  Z\to 0$ and an operator $T: Z'\to Z$, then there is a unique space $PB$, together with an operator $PB\to X$, which makes commutative the following diagram:
\begin{center}
\begin{tikzcd}
0 \arrow[r] & Y \arrow[r, "i"]      & X \arrow[r, "p"]       & Z \arrow[r]                 & 0 \\
0 \arrow[r] & Y \arrow[r] \arrow[u,equal] & PB \arrow[r] \arrow[u] & Z' \arrow[r] \arrow[u, "T"] & 0
\end{tikzcd}
\end{center}
Explicitly, $PB=\{(x,z')\in X\oplus Z: p(x)=T(z')\}$, with the subspace norm of $X\oplus Z$.
\par Dually, if we take the previous exact sequence and an operator $S:Y\to Y'$, there is a unique space $PO$ and an operator $X\to PO$ such that the following diagram is commutative:
\begin{center}
\begin{tikzcd}
0 \arrow[r] & Y \arrow[r, "i"] \arrow[d, "S"] & X \arrow[r, "p"] \arrow[d] & Z \arrow[r] \arrow[d,equal] & 0 \\
0 \arrow[r] & Y' \arrow[r]                    & PO \arrow[r]               & Z \arrow[r]           & 0
\end{tikzcd}
\end{center}
Explicitely, $PO=(Y'\oplus X)/\overline\Delta$ where $\Delta=\{(i(y), -S(y)): y\in Y\}$, with the quotient norm.

\subsection{$\pe=\con$}\label{pe}
Recall that \emph{Martin's axiom for $\sigma$-centered posets $\pe=\con$}  amounts to saying that whenever $\cB\sub {\mathcal P}(\omega)$ is a family  with $|\cB|<\con$ such that $B_1\cap\ldots \cap B_n$ is infinite for every $n$ and any $B_i\in\cB$, then there is an infinite set $A$ such that $A\sub^\ast B$ for every $B\in\cB$, see Fremlin \cite{Fr84}. Recall also that, in the topological setting, under  $\pe=\con$, every compact space $K$ of weight $<\con$ is sequentially compact, see \cite[24A]{Fr84}.

\subsection{$\cC$-spaces}

The symbol $K$ always stands for a compact Hausdorff space. We say $X$ is \emph{a $\cC$-space} if it is \emph{isomorphic} to a space of continuous real-valued functions $C(K)$ for some $K$. For a given space $C(K)$, we, as usual, identify the dual space $C(K)^*$ with the space $M(K)$ of signed Radon measures on $K$ of finite variation. $M_1(K)$ stands for the unit ball of $M(K)$, equipped with the $weak^\ast$ topology inherited from $C(K)^*$. For a measure $\mu\in M(K)$, $|\mu|$ denotes its variation.

%\begin{definition} Let $X$ be a Banach space $X$ and $0<c\leq 1$. We say $F \subseteq B_{X^*}$ is \emph{$c$-norming} if the composition $T= R\iota$
%$$\begin{CD} X@>\iota>> C(B_{X^*}) @>R>> C(F)\end{CD}$$
%of the canonical embedding $\iota$ with the natural restriction operator $R$ is injective, has closed range, and has an inverse $T^{-1}: T(X) \to X$ such that $\|T^{-1}\| < 1/c$. We say that $F$ is \emph{free} if $T$ is onto.\end{definition}

\begin{definition} Let $X$ be a Banach space. Given any $weak^\ast$ compact $F$ in $B_{X^*}$, we can consider the norm-one operator
	$$T: X \longrightarrow C(F) \quad , \quad x \mapsto x(f) = f(x)$$ 
	Given $0<c\leq 1$, we say $F$ is \emph{$c$-norming} if $T$ is injective, has closed range, and has an inverse $T^{-1}: T(X) \to X$ such that $\|T^{-1}\| < 1/c$.
	We say that $F$ is \emph{free} if $T$ is onto. 
\end{definition}

\begin{lemma}\label{p:1}
For a Banach space $X$, the following are equivalent:
\begin{enumerate}[(i)]
\item $X$ is  a $\cC$-space;
\item there is a free subset  $F$ of $B_{X^\ast}$ which is  $c$-norming for some $0<c\leq1$.
\end{enumerate}
\end{lemma}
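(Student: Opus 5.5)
We prove the two implications separately; both are essentially unwinding the definitions. The tool is the Dirac embedding $k\mapsto\delta_k$ of $K$ into $M_1(K)$, which is a $weak^\ast$ homeomorphism onto its image.

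\emph{(i)$\Rightarrow$(ii).} Suppose $u\colon X\to C(K)$ is an isomorphism onto. After rescaling we may assume $\|u\|\le 1$; then $\|u^{-1}\|=M$ for some $M\ge 1$.
\begin{itemize}
\item Consider the adjoint $u^\ast\colon M(K)\to X^\ast$. It is $weak^\ast$-to-$weak^\ast$ continuous and has norm at most $1$.
\item Put $F=\{u^\ast(\delta_k):k\in K\}$. This is a continuous image of a compact set, so it is $weak^\ast$ compact, and it lies in $B_{X^\ast}$.
\item Since $u^\ast$ is injective (as $u$ is onto), the map $k\mapsto u^\ast\delta_k$ is a homeomorphism $K\to F$.
\item Under this identification, $T\colon X\to C(F)$ sends $x$ to $(u^\ast\delta_k)(x)=(ux)(k)$. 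So $T$ is just $u$ composed with the isometric isomorphism $C(K)\cong C(F)$.
\end{itemize}
Hence $T$ is onto, so $F$ is free. It is also injective with closed range, and $\|T^{-1}\|=\|u^{-1}\|=M$. Any $c$ with $0<c<1/M$ makes $F$ $c$-norming, which is allowed because $M\ge1$ gives $c\le 1$.

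\emph{(ii)$\Rightarrow$(i).} If $F$ is free and $c$-norming, then $T$ is a bounded linear bijection of $X$ onto $C(F)$. Its inverse is bounded by the $c$-norming hypothesis (or simply by the open mapping theorem). So $X$ is isomorphic to $C(F)$, where $F$ is compact Hausdorff in the $weak^\ast$ topology, and $X$ is a $\cC$-space.

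The only point needing a little care is the normalization in (i)$\Rightarrow$(ii): we must ensure $F\subseteq B_{X^\ast}$ and handle the strict inequality $\|T^{-1}\|<1/c$. Scaling $u$ so that $\|u\|\le1$ and then choosing $c$ strictly smaller than $1/\|u^{-1}\|$ takes care of both.
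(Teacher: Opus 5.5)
Your proof is correct and follows the paper's argument. You take $F=u^\ast(\{\delta_k:k\in K\})$ for a normalized isomorphism $u\colon X\to C(K)$, check that it is $weak^\ast$ compact, free and norming, and note that the converse is immediate. Your version is also slightly more careful than the paper's in two places: you make the homeomorphism $K\cong F$ explicit to justify freeness, and you choose $c<1/\|u^{-1}\|$ to meet the strict inequality $\|T^{-1}\|<1/c$, which the paper's normalization $c\|x\|\le\|Tx\|$ only gives non-strictly.
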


\begin{proof} Assume $T:X\to C(K)$ is an isomorphism such that
$c \cdot \|x\|\le \|Tx\|\le \|x\|$ for every $x\in X$. This means that the norm $\nnorm{x} = \sup_{k\in K} |Tx(k)|$ is equivalent to the norm in $X$. Hence write $\Delta_K=\{\delta_k:k\in  K\}$ and consider $F=T^\ast(\Delta_K)$. Such $F$ is $weak^\ast$ compact, since $T^\ast$ is $weak^\ast$ continuous, and it is clearly $c$-norming and free. The converse is obvious.
\end{proof}

The main idea of our construction is to eliminate every possibility of $B_{X^*}$ having a free $c$-norming subset, so that the resulting space $X$ cannot be a $\cC$-space. The next simple lemma will enable us to recognize sets that are not free.

\begin{lemma}\label{p:2}
Let $F\sub B_{X^\ast}$ be $weak^\ast$ compact and $c$-norming for some $c>0$.
If there are distinct $x_0^\ast, x_1^\ast, x_2^\ast\in F$ such that
\[ \left\| x_0^\ast-1/2(x_1^\ast+x_2^\ast)\right\|<c,\]
then $F$ is not free.
\end{lemma}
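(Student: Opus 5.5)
Suppose, towards a contradiction, that $F$ is free. Then the operator $T\colon X\to C(F)$, $Tx(f)=f(x)$, is onto. Since $F$ is $c$-norming, $T$ is an isomorphism onto $C(F)$ with $\|T\|\le 1$ and $\|T^{-1}\|<1/c$. The plan is to use surjectivity to build a vector $x\in X$ that separates $x_0^\ast$ from the midpoint of $x_1^\ast,x_2^\ast$ by too much, and then to contradict the norm estimate in the hypothesis.

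\emph{Step 1: a test function.} The points $x_0^\ast,x_1^\ast,x_2^\ast$ are distinct points of the compact Hausdorff space $F$ (with the $weak^\ast$ topology). By Urysohn's lemma there is $g\in C(F)$ with $\|g\|_\infty=1$, $g(x_0^\ast)=1$ and $g(x_1^\ast)=g(x_2^\ast)=-1$.

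\emph{Step 2: pull back.} Surjectivity of $T$ gives $x=T^{-1}g$. It satisfies $\|x\|\le\|T^{-1}\|\,\|g\|<1/c$, and $x_i^\ast(x)=g(x_i^\ast)$ for each $i$.

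\emph{Step 3: the contradiction.} Evaluating the functional $x_0^\ast-\tfrac12(x_1^\ast+x_2^\ast)$ at $x$ gives
\[
2=\Bigl|\bigl(x_0^\ast-\tfrac12(x_1^\ast+x_2^\ast)\bigr)(x)\Bigr|\le \Bigl\|x_0^\ast-\tfrac12(x_1^\ast+x_2^\ast)\Bigr\|\,\|x\|<c\cdot\frac1c=1,
\]
which is absurd. Hence $F$ is not free.

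The argument has no real obstacle. The only points needing care are that $\|T^{-1}\|<1/c$ is a strict inequality, which is part of the definition of $c$-norming, and that distinctness of the three points is what makes the Urysohn function in Step 1 available. The bound actually obtained is $2<1$, so there is ample room; even taking $g(x_0^\ast)=1$ and $g=0$ at the other two points would give the contradiction $1<1$.
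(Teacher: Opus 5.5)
Your proof is correct and follows the paper's argument: use freeness to lift a norm-one function on $F$ that separates $x_0^\ast$ from $\{x_1^\ast,x_2^\ast\}$, then bound the functional $x_0^\ast-\tfrac12(x_1^\ast+x_2^\ast)$ at the lifted vector using $\|x\|<1/c$. The only difference is your choice of test values $(1,-1,-1)$ in place of the paper's $(0,1,1)$, which doubles the margin and in fact shows the lemma still holds with $2c$ in place of $c$ in the hypothesis.
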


\begin{proof}
Suppose otherwise; consider a norm-one $g\in C(F)$ such that $g(x_0^\ast)=0$ while $g(x_1^\ast)=1=g(x_2^\ast)$.
By freeness, there is $x\in X$ such that $x|F=g|F$. Note that $c\|x\|\le \|g\|=1$ since $F$ is $c$-norming, and
\[c>\left\| x_0^\ast-1/2(x_1^\ast+x_2^\ast)\right\|\ge c |x_0^\ast(x)-1/2(x_1^\ast(x)+x_2^\ast(x)|=c,\]
a contradiction.
\end{proof}

\begin{definition}\label{p:3}
In the setting of Lemma \ref{p:2}, we call $x_0^\ast, x_1^\ast, x_2^\ast\in F$
 a {\em forbidden $c$-triple}.
\end{definition}

\subsection{Almost disjoint families and Alexandroff-Urysohn compacta} \label{s:adf}
A topological space $X$ is \emph{scattered} if no nonempty subset $A\subseteq X$ is dense-in-itself.
For an ordinal $\alpha$, $X^{(\alpha)}$ denotes the $\alpha$th
Cantor-Bendixon derivative of the space $X$. For a scattered
space $X$, the scattered height is
${ht(X)}=\min\{\alpha: X^{(\alpha)} =
\emptyset\}$.

We write $\bA(\kappa)$ for the (Alexandroff) one-point compactification
of a discrete space of cardinality $\kappa$.

Recall that a family $\cA$ of subsets of $\omega$ is almost disjoint if $A\cap B$ is finite for any distinct $A,B\in \cA$.
We write $A\cap B=^\ast\emptyset$ to say that $A,B$ are almost disjoint, and $A\sub^\ast B$ to denote that $A$ is almost contained in $B$.

To every almost disjoint family $\cA$ one can associate an Alexandroff-Urysohn  compactum $K_\cA$ of height 3. Such space may be simply defined as the Stone space of the algebra of subsets of $\omega$ generated by $\cA$ and all finite sets. In other words,
\[K_\cA=\omega\cup \{p_A:A\in\cA\}\cup\{\infty\},\]
where points in $\omega$ are isolated, basic open neighbourhoods of $p_A$ are of the form $\{p_A\}\cup (A\sm I)$ with $I\sub \omega$ finite, and $\infty$ is the one-point compactification of the locally compact space $\omega\cup \{p_A:A\in\cA\}$. Alexandroff-Urysohn compacta are of interest for us, since
the space $C(K_\cA)$ is a non-trivial twisted sum of $c_0$ and $c_0(\con)$
whenever $|\cA|=\con$, see \cite{CCMPS19} for details.

\subsection{Measures and Boolean algebras}
We shall discuss Boolean algebras and their Stone spaces using the classical Stone duality. Given an algebra $\fB$, its Stone space (of all ultrafilters on $\fB$) is denoted by $\ult(\fB)$. On the other hand, if $K$ is a zero-dimensional compact space then $\clop(K)$ is the algebra of clopen subsets of $K$. There is a natural isomorphism between $\fB$ and $\clop(\ult(\fB))$.

We write $M(\fB)$ for the space of all signed {\em finitely}  additive functions on an algebra $\fB$.
Likewise, for any $r > 0$, $M_r(\fB)$ denotes the family
of $\mu\in M(\fB)$ for which $\|\mu\|\le r$. Here, as usual, $\|\mu\|=|\mu|(1_\fB)$, where the variation $|\mu|$ is given by
\[|\mu|(A)=\sup_{B\in \fB, B\sub A} \{|\mu(B)| + |\mu(A\setminus B)|\}.\]
If $\mu\in M(\fB)$ and $\fB'\sub \fB$ is some subalgebra, then $\|\mu\|_{\fB'}$ will stand for the norm of the restriction $\mu|\fB'$.

The space $M(\fB)$ may be identified with $M(\ult(\fB))$ because every $\mu\in M(\fB)$ defines, via the Stone isomorphism, an additive function on $\clop(\ult(\fB))$ and such a function extends uniquely to a Radon measure. Note that, under such an identification, the $weak^\ast$ topology on $M(\fB)$ becomes the topology of pointwise convergence on elements of $\fB$.

Below we consider Boolean algebras contained in ${\mathcal P}(\omega)$, so let us introduce some terminology and notation for this particular case. We write $\la\cA\ra$ to denote the subalgebra generated by any $\cA\sub {\mathcal P}(\omega)$.

\begin{definition}
An algebra $\fB \sub \ps(\omega)$ is said to be \emph{trivial on a set} $S\sub\omega$ if for every $B\in\fB$ either $B\cap S$ or $B\sm S$ is finite.
\end{definition}

 The symbol $\ml(\cP(\omega))$ will denote the subspace of $M(\cP(\omega))$ consisting of the measures $\mu$ which are defined by an element $x\in \ell_1$, that is
 \[ \mu(B)=\sum_{n\in B} x(n),\]
  for $B\sub\omega.$
Below we often take some $\mu\in \ml(\cP(\omega))$ and consider the restriction of $\mu$ to some subalgebras of  $\cP(\omega)$.

We shall use  the well-known Rosenthal lemma in the following form (cf.\ \cite{DU77}, page 18).

\begin{theorem}\label{ros}
Let $(\mu_{n})_{n}$ be a uniformly bounded sequence of measures in $\ml(\ps(\omega))$. For every infinite set $S\sub\omega$ and $\eps>0$ there is an infinite subset $N\sub S$ such that $|\mu_k|(N\sm\{k\})<\eps$ whenever $k\in N$.
\end{theorem}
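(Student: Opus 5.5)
The plan is a two-stage diagonal construction. Put $M=\sup_n\|\mu_n\|<\infty$; since each $\mu_n$ comes from an element of $\ell_1$, each $|\mu_n|$ is a countably additive measure on $\cP(\omega)$. For $k\in N$ we split $|\mu_k|(N\sm\{k\})$ into a ``forward'' part, the mass of $|\mu_k|$ on the elements of $N$ chosen after $k$, and a ``backward'' part, the mass on the elements chosen before $k$. We make the forward part $<\eps/2$ and then the backward part $<\eps/2$, using separate thinning arguments.

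\emph{Forward stage.} Fix an integer $p>2M/\eps$. Set $T_0=S$ and build inductively $k_1<k_2<\dots$ and infinite sets $T_0\supseteq T_1\supseteq\dots$. Given $T_{i-1}$, let $k_i=\min T_{i-1}$ and split $T_{i-1}\sm\{k_i\}$ into $p$ pairwise disjoint infinite pieces. Since $|\mu_{k_i}|$ has total mass at most $M$, some piece $P$ satisfies $|\mu_{k_i}|(P)\le M/p<\eps/2$; put $T_i=P$. Then $K=\{k_i:i\ge1\}$ is an infinite subset of $S$, and $\{k_j:j>i\}\sub T_i$ for each $i$. Hence $|\mu_{k_i}|(\{k_j:j>i\})<\eps/2$, and this bound persists when we pass to any infinite subset of $K$.

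\emph{Backward stage.} The difficulty here is that later measures may put mass on earlier points; for example, all $\mu_k$ could equal $\delta_{k_1}$. We handle this by a pointwise limit. Every coordinate $|\mu_{k}|(\{k_j\})$ is bounded by $M$, so a diagonal argument gives an infinite $I\sub\omega$ such that $|\mu_{k_i}|(\{k_j\})\to\lambda_j$ as $i\to\infty$, $i\in I$, for every $j$. By Fatou, $\sum_j\lambda_j\le M$, so there is a finite set $J$ with $\sum_{j\notin J}\lambda_j<\eps/4$.

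Now choose $i_1<i_2<\dots$ in $I\sm J$ inductively. Having chosen $i_1,\dots,i_{m-1}$, only finitely many coordinates are involved, so we can take $i_m$ large enough that
\[
\sum_{l<m}\Big|\,|\mu_{k_{i_m}}|(\{k_{i_l}\})-\lambda_{i_l}\Big|<\eps/4 .
\]
This gives the backward estimate
\[
|\mu_{k_{i_m}}|(\{k_{i_l}:l<m\})<\eps/4+\sum_{j\notin J}\lambda_j<\eps/2 .
\]

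Finally let $N=\{k_{i_m}:m\ge1\}\sub S$. For $k=k_{i_m}\in N$, adding the forward bound (valid since $N\sub K$) and the backward bound gives $|\mu_k|(N\sm\{k\})<\eps$. The only delicate point is the backward stage, which the pointwise-limit argument resolves. The forward stage is a plain pigeonhole argument over disjoint pieces.
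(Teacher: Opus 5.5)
Your proof is correct. The paper does not prove this statement; it quotes Rosenthal's lemma from Diestel--Uhl (p.~18). So the natural comparison is with the standard short proof found there, which, as I recall it, runs as follows. It argues by contradiction and uses no limits. Partition $S$ into infinitely many infinite pieces $N_p$. If no piece works, choose in each $N_p$ a ``bad'' index $k_p$ with $|\mu_{k_p}|(N_p\setminus\{k_p\})\ge\varepsilon$, and repeat the procedure inside $S_1=\{k_p:p\in\omega\}$. Since $N_p\setminus\{k_p\}$ is disjoint from $S_1$, an index that is bad at the second stage carries mass $\ge\varepsilon$ on two disjoint sets. After $r>M/\varepsilon$ stages some $|\mu_k|$ would have total mass $>M$, so some piece at some finite stage is already the required $N$.

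You instead build $N$ directly, separating the mass on later points from the mass on earlier points. The forward part is handled by essentially the same mass-counting, namely your pigeonhole over $p$ disjoint pieces. The backward part is handled by a diagonal pointwise limit $\lambda_j$ together with the tail bound $\sum_{j\notin J}\lambda_j<\varepsilon/4$, an idea close to the paper's own proof of Lemma~\ref{ros2}.

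The standard argument is shorter and avoids diagonalization. Yours is constructive and makes explicit that the forward estimate survives any further thinning. Both use only finite additivity of the $|\mu_n|$: your appeal to Fatou can be replaced by bounding finite partial sums $\sum_{j\in F}\lambda_j\le M$. Hence both also cover finitely additive measures on $\mathcal P(\omega)$.
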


The following fact will also be used.

\begin{lemma}\label{ros2}
Let $(\mu_{n})_{n}$ be a uniformly bounded
sequence of measures in $\ml(\ps(\omega))$. Then for every infinite set $S\sub\omega$ there are infinite subsets $N,T\sub S$ such that
the subsequence $(\mu_k)_{k\in N}$ is norm convergent on the set $T$.
\end{lemma}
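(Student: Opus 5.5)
We need to prove Lemma \ref{ros2}: given a uniformly bounded sequence $(\mu_n)$ in $\ml(\ps(\omega))$ and an infinite $S\sub\omega$, find infinite $N,T\sub S$ such that $(\mu_k|T)_{k\in N}$ is norm convergent. Here $\mu_k|T$ is the element $x_k\cdot 1_T\in\ell_1$, and the norm on the restriction to subsets of $T$ is the $\ell_1$-norm $\sum_{n\in T}|x_k(n)|$.

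We shall combine a diagonal argument on coordinates with Theorem \ref{ros}, applied to the moduli of the differences. First, write $\mu_n$ as given by $x_n\in\ell_1$ with $\|x_n\|_1\le M$. By a diagonal argument, choose an infinite $N_0\sub\omega$ such that $x_k(j)$ converges as $k\to\infty$, $k\in N_0$, for every $j\in\omega$. Denote the limit by $x(j)$. By Fatou, $\|x\|_1\le M$, so $x\in\ell_1$. Norm convergence of $(x_k 1_T)_{k\in N}$ on $T$ then amounts to $\sum_{j\in T}|x_k(j)-x(j)|\to 0$.

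The key point is that pointwise convergence plus a uniform $\ell_1$ bound fails to give norm convergence only because of mass escaping to infinity. That escaping mass can be avoided by thinning $T$. We build $T=\{t_1<t_2<\dots\}\sub S$ and $N=\{k_1<k_2<\dots\}\sub N_0$ inductively, aiming for
\[\sum_{j\in T}|x_{k_i}(j)-x(j)|\le 3\cdot 2^{-i}.\]
At stage $i$, suppose $t_1,\dots,t_{i-1}$ and $k_1,\dots,k_{i-1}$ are chosen. Pick $k_i\in N_0$ larger than $k_{i-1}$ with $|x_{k_i}(t_l)-x(t_l)|<2^{-i}/i$ for all $l<i$; this is possible by pointwise convergence. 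Next choose $t_i\in S$ larger than all previous $t$'s so large that $\sum_{j\ge t_i}|x_{k_l}(j)|<2^{-i}$ for all $l\le i$, and also $\sum_{j\ge t_i}|x(j)|<2^{-i}$; this is possible since finitely many $\ell_1$ vectors have small tails.

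Now estimate, for fixed $i$, the sum over $T$. Coordinates $t_l$ with $l<i$ contribute less than $2^{-i}$ in total. Coordinates $t_l$ with $l> i$ satisfy $t_l\ge t_{i+1}$, so their total is at most $\sum_{j\ge t_{i+1}}(|x_{k_i}(j)|+|x(j)|)<2\cdot 2^{-i-1}$. That leaves the single coordinate $t_i$, contributing $|x_{k_i}(t_i)-x(t_i)|$. This term is the main obstacle, since $t_i$ was chosen after $k_i$.

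To fix this, apply the construction to every second index. Alternatively, simply drop the diagonal term by defining $N=\{k_2,k_4,\dots\}$ and $T=\{t_1,t_3,\dots\}$. Then for $k_{2m}$ every $t\in T$ has index $\ne 2m$, and the bound above gives $\sum_{j\in T}|x_{k_{2m}}(j)-x(j)|\le 2\cdot 2^{-2m}\to0$. This is exactly what is needed. Theorem \ref{ros} is not strictly required here, though one could alternatively use it to kill the diagonal term $|\mu_k|(N\sm\{k\})$ in a variant argument.
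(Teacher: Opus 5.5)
Your proof is correct and is essentially the paper's argument: extract a coordinatewise (weak$^*$) convergent subsequence, then interleave the choice of $k_i$ (making the coordinates at $t_1,\dots,t_{i-1}$ close to the limit) with the choice of $t_i$ (making tails from $t_i$ on small), exactly as the paper does with $n_k$ and $m_k$ after subtracting the limit. Two small remarks. First, the diagonal term is not actually an obstacle: your tail condition at stage $i$ includes $l=i$, so it bounds $\sum_{l\ge i}|x_{k_i}(t_l)-x(t_l)|<2\cdot 2^{-i}$ directly, which makes the parity trick harmless but unnecessary. Second, you should take $N_0\sub S$ rather than $N_0\sub\omega$, so that $N\sub S$ as the statement requires.
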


\begin{proof} There is no loss of generality if we suppose $S=\omega$. Since $(\mu_n)_n$ is uniformly bounded, it is a bounded subset of $\ell_1$, so by passing to a subsequence, we may assume that $(\mu_n)_n$ is $weak^\ast$ convergent to $\mu$. Now, replacing $(\mu_n)_n$ with $(\mu_n -\mu)_n$ if necessary, we can also assume that $\mu=0$.
\par We construct by induction  increasing sequences of natural numbers $(n_k)_k$, $(m_k)_k$ so that
	\[ \sum_{j=1}^{m_{k-1}} |\mu_n(j)| \le {2^{-k-1}} \mbox{ for all } n\geq n_k \quad\text{ and }\quad \sum_{j=m_k}^\infty |\mu_{n_k}(j)|\le
{2^{-k-1}}\]
Let $N=\{n_k: k\in\omega\}$ and $T=\{m_k:k\in\omega\}$. It is now straightforward to check that $(\mu_k)_{k\in N}$ is norm convergent to $0$ in $T$, since for each $k\in\omega$
$$\sum_{j\in T} |\mu_{n_k}(j)| \leq \sum_{j=1}^{k-1}|\mu_{n_k}(m_j)| + \sum_{j=k}^{\infty}|\mu_{n_k}(m_j)| < 2^{-k}$$
\end{proof}

\par Last, given a sequence of measures $(\mu_n)_n$ in $\ml(\ps(\omega))$ and two subalgebras $\fB \subseteq \fC \subseteq \ps(\omega)$, we will need to discuss up to what extent clustering of $(\mu_n)_n$ in $\fB$ differs from clustering in $\fC$. This is essentially the content of the following lemma.

\begin{lemma}\label{com}
Let $\fB$ be a subalgebra of $\ps(\omega)$, $D\subseteq \omega$ and let $\fC\sub \la \fB\cup\cP(D)\ra$.
Assume $(\mu_n)_n$ is a sequence in $\ml\big(\cP(\omega)\big)$.

\begin{enumerate}[(a)]
\item If $(\mu_n)_n$  is convergent on $\fB$ and $|\mu_n|(D)\le \eps$ for some $\eps>0$ and every $n$ then  any cluster points $\mu',\mu''$ of
$\{\mu_n:n\in\omega\}$ in $M(\fC)$ satisfy $\|\mu'-\mu''\|_{\fC}\le 6\eps$.
\item If $(\mu_n)_n$ is norm convergent on $D$ then
 any two cluster points $\mu',\mu''$ of
$\{\mu_n:n\in\omega\}$ in $M(\fC)$ satisfy
\[ \|\mu'-\mu''\|_{\fB}=  \|\mu'-\mu''\|_{\fC}.\]
\end{enumerate}
\end{lemma}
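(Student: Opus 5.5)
Everything rests on one description of $\fC$: since $\fC\sub\la\fB\cup\cP(D)\ra$, every $C\in\fC$ can be written as
\[C=(B\sm D)\cup E\]
for some $B\in\fB$ and $E\sub D$. Indeed, the sets of this form already constitute an algebra containing $\fB$ and $\cP(D)$. I will also use that a cluster point $\mu'$ of $\{\mu_n\}$ in $M(\fC)$ is a pointwise cluster value on $\fC$. So for finitely many $C_1,\dots,C_k\in\fC$ and every $\delta>0$ there are infinitely many $n$ with $|\mu_n(C_i)-\mu'(C_i)|<\delta$ for all $i$. Finally, for an additive $\nu$ on $\fC$ I use the estimate
\[\|\nu\|_\fC\le 2\sup_{C\in\fC}|\nu(C)|.\]

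\emph{Part (a).} Let $\mu_n\to\nu$ pointwise on $\fB$. Then $\mu'|\fB=\mu''|\fB=\nu$. Fix $C=(B\sm D)\cup E$ and write $\mu_n(C)=\mu_n(B)-\mu_n(B\cap D)+\mu_n(E)$. The last two terms are each bounded by $|\mu_n|(D)\le\eps$, so $|\mu_n(C)-\mu_n(B)|\le 2\eps$ for every $n$. Passing to cluster points, using finitely many sets at a time, gives $|\mu'(C)-\nu(B)|\le 2\eps$, and the same for $\mu''$. Hence $|(\mu'-\mu'')(C)|\le 4\eps$.

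A bound on the sup only gives $8\eps$ for the norm, so to reach $6\eps$ I will estimate the variation directly. Take a partition $C_1,\dots,C_k$ of $1$ in $\fC$, written as $C_i=(B_i\sm D)\cup E_i$. We may assume the $B_i\sm D$ are disjoint, and by refining that the $B_i$ form a partition in $\fB$ and the $E_i$ form a partition of $D$.

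\begin{itemize}
\item The $D$-part is controlled because $\mu'$ and $\mu''$ restricted to $\cP(D)\cap\fC$ have variation at most $\eps$ each, being cluster points of measures with $|\mu_n|(D)\le\eps$. This contributes at most $2\eps$.
\item The parts $(\mu'-\mu'')(B_i\sm D)$ reduce to $(\mu'-\mu'')(B_i\cap D)$, since $\mu'$ and $\mu''$ agree on $B_i$. Their total contributes at most $2\cdot 2\eps$.
\end{itemize}

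Altogether this gives $6\eps$, and I expect the bookkeeping to land on exactly this constant.

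\emph{Part (b).} One inequality, $\|\mu'-\mu''\|_\fB\le\|\mu'-\mu''\|_\fC$, is trivial. For the reverse, norm convergence on $D$ means $\mu_n|\cP(D)\to\lambda$ in $\ell_1(D)$, uniformly over all subsets of $D$. Hence every cluster point agrees with $\lambda$ on all $E\sub D$ in $\fC$, and also on all sets $B\cap D$. So $\nu:=\mu'-\mu''$ vanishes on every subset of $D$ lying in $\fC$. For any $C=(B\sm D)\cup E$ in $\fC$ this gives $\nu(C)=\nu(B)-\nu(B\cap D)+\nu(E)=\nu(B)$.

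Now take a partition of $1$ in $\fC$ by sets $C_i=(B_i\sm D)\cup E_i$. As in (a) we may take the $B_i$ to be a partition in $\fB$. Then
\[\sum_i|\nu(C_i)|=\sum_i|\nu(B_i)|\le\|\nu\|_\fB,\]
which gives equality.

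The main obstacle is the refinement step: ensuring that a $\fC$-partition can be represented by a $\fB$-partition $(B_i)$ together with a partition of $D$. If $B_i\sm D$ and $B_j\sm D$ are disjoint, I replace the $B_i$ by their disjointifications $B_i\sm\bigcup_{j<i}B_j$. This changes each set only inside $D$, and the change is then absorbed into the $E_i$.
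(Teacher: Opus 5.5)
Your strategy matches the paper's. You bound $|\mu'-\mu''|$ on $D$ by $2\eps$ because cluster points inherit $|\mu_n|(D)\le\eps$. You use $\mu'=\mu''$ on $\fB$ to move the part of each $C_i$ lying outside $D$ back into $D$. In (b) you use that $\mu'-\mu''$ vanishes on $\fC$-subsets of $D$. Your $\fB$-partition bookkeeping is in fact sharper than the paper's sup-bound: $\sum_i|(\mu'-\mu'')(B_i\cap D)|\le|\mu'|(D)+|\mu''|(D)\le 2\eps$, so you get $4\eps$, not just $6\eps$.

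There is one gap, and it is not the refinement step you single out, which is routine. It is a domain issue: $\mu'$ and $\mu''$ are defined only on $\fC$. Yet you evaluate them on $B_i\sm D$, $B_i\cap D$ and $E_i=C_i\cap D$, and in (b) on ``all sets $B\cap D$''. These sets need not belong to $\fC$ unless $D\in\fC$. The paper's proof opens with exactly the reduction to $D\in\fC$. You must either justify that reduction, by extending cluster points from $\fC$ to $\la\fC\cup\{D\}\ra$, or avoid it. Avoiding it is easy with your own refinement. Take $B_i\in\fB\sub\fC$ pairwise disjoint with $B_i\sm D=C_i\sm D$. Then $C_i\sm B_i$ and $B_i\sm C_i$ are $\fC$-subsets of $D$, pairwise disjoint in $i$, and with $\nu=\mu'-\mu''$,
\[\nu(C_i)=\nu(B_i)+\nu(C_i\sm B_i)-\nu(B_i\sm C_i).\]
In (a) the first term is $0$, and each of the other two sums over $i$ to at most $2\eps$ in absolute value. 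In (b) the last two terms vanish, so $\sum_i|\nu(C_i)|=\sum_i|\nu(B_i)|\le\|\nu\|_\fB$.
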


\begin{proof} Note that we can assume that $D\in\fC$.

With the assumptions of  $(a)$, we have $|\mu'-\mu''|(D)\le2\eps$. For any $B\in\fB$, $\mu'(B)=\mu''(B)$, and  so
\[|\mu'(B\sm D)-\mu''(B\sm D)|= | \mu'(B)-\mu'(B\cap D) -(   \mu''(B)-\mu''(B\cap D) )|=\]
\[=|\mu''(B\cap D) -\mu'(B\cap D)|\le 2\eps.\]
This gives $|\mu'-\mu''|(\omega\sm D)\le 4\eps$, and hence
\[|\mu'-\mu''|(\omega)=|\mu'-\mu''|(D)+|\mu'-\mu''|(\omega\sm D)\le 6\eps.\]

Now clause $(b)$ follows by similar calculations since we now have $|\mu'-\mu''|(D)=0$.
\end{proof}

\section{The construction --- An almost disjoint family}

Our first step on the way to twisted sums is the construction of an almost disjoint family $\mathcal A$ such that that the corresponding compact space $K_{\mathcal A}$ has a certain peculiar property.
It will be convenient to introduce the following technical notion.

\begin{definition}\label{fork}
Given a compact space $K$ and $c>0$, $\eps>0$, we say that three measures $d \delta_{x_i}+\nu_i\in M(K)$, $i=0,1,2$,  form
a $(c,\eps)$\emph{-fork} if $|d|\ge c$ and

\begin{enumerate}[(i)]
\item $x_0, x_1, x_2$ are different points of $K$;
\item $\|\nu_i-\nu_j\| \le \eps$ for $i,j=0,1,2$.
% \item  $|b_i|-d|< \varepsilon$ for $i=0,1,2$.
%	\item $\nu_i\{x_i\}=0$ for $i=0,1,2$;
\end{enumerate}
\end{definition}

Below, working with some algebra $\fB$,  we say that a subset of $M(\fB)$ contains a fork (with parameters given) if
such a fork can be found in the corresponding set of measures on $M(\ult(\fB))$.

The following lemma describes the inductive step of the construction used in the proof of Theorem \ref{adf}.

\begin{lemma}$[\mathfrak{p}=\con]$\label{step}
Suppose that $\fB$ is an algebra on $\omega$ containing all finite sets such that $|\fB|<\con$;
let $S\sub\omega$ be an infinite set on which  $\fB$ is trivial.

Suppose that a sequence $(\mu_k)_{k\in S}$  in the unit ball of $\ml(\cP(\omega))$ satisfies
 $|\mu_k(\{k\})|\ge c$ for every $k\in S$, where $c>0$ is a fixed constant.
Then for  every $\eps>0$  there are infinite disjoint sets $N,T\sub S$, and pairwise disjoint sets $A_0,A_1,A_2\sub N$ such that, if we let $\fB'=\la \fB\cup \{A_0,A_1,A_2\}\ra $,
 then

\begin{enumerate}[(i)]
\item the sequence $(\mu_k)_{k\in N}$ is
norm convergent on $T$;
\item the closure of the set $\{\mu_k: {k\in N}\}$ in $M(\fB')$ contains a $(c,\eps)$-fork.	
\end{enumerate}
\end{lemma}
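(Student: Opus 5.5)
The plan is to thin out $S$ in a few successive steps until the sequence is well behaved, and then split the final index set into three pieces; the three ultrafilter limits of $(\mu_k)$ along these pieces will be the fork. Throughout, write $\mu_k=d_k\delta_k+\lambda_k$, where $d_k=\mu_k(\{k\})$ and $\lambda_k$ is $\mu_k$ restricted to $\omega\sm\{k\}$.

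\emph{Step 1 (Rosenthal).} Apply Theorem \ref{ros} to $S$ with $\eps/6$. This gives an infinite $N_0\sub S$ with $|\mu_k|(N_0\sm\{k\})<\eps/6$ for $k\in N_0$. The same bound then holds for every infinite subset of $N_0$ we pass to later.

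\emph{Step 2 (Lemma \ref{ros2}).} Apply Lemma \ref{ros2} to $N_0$. This gives infinite $N_1,T_1\sub N_0$ such that $(\mu_k)_{k\in N_1}$ is norm convergent on $T_1$. The two sets need not be disjoint, so we choose elements alternately to get infinite disjoint $N_2\sub N_1$ and $T\sub T_1$. Norm convergence on $T$ survives passing to the subsequence and shrinking $T$, which gives (i).

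\emph{Step 3 (convergence on $\fB$ and of $d_k$).} This is where $\pe=\con$ enters, and I expect it to be the only nontrivial point. The ball $M_1(\fB)$, with the topology of pointwise convergence on $\fB$, is a compact space of weight at most $|\fB|<\con$. By the remark in Section \ref{pe} it is sequentially compact, so some infinite $N_3\sub N_2$ makes $(\mu_k)_{k\in N_3}$ converge on every $B\in\fB$. Passing further to an infinite $N\sub N_3$, we may assume $d_k\to d$ with $|d|\ge c$.

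Since $\fB$ is trivial on $S$, for each $B\in\fB$ the value $\delta_k(B)$ is eventually constant along $k\in N$. Hence $(\delta_k)$ converges on $\fB$, and therefore so does $\lambda_k=\mu_k-d_k\delta_k$.

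\emph{Step 4 (the fork).} Split $N$ into three pairwise disjoint infinite sets $A_0,A_1,A_2$ and put $\fB'=\la\fB\cup\{A_0,A_1,A_2\}\ra$. For each $i$, fix a nonprincipal ultrafilter $\mathcal U_i$ on $\omega$ containing $A_i$. Let $x_i\in\ult(\fB')$ be the $\mathcal U_i$-limit of $\delta_k$, let $\nu_i$ be the $\mathcal U_i$-limit of $\lambda_k$ in $M(\fB')$, and let $\mu^i$ be the $\mathcal U_i$-limit of $\mu_k$. Then:
\begin{itemize}
\item $\mu^i=d\,\delta_{x_i}+\nu_i$ pointwise on $\fB'$, because $d_k\to d$; moreover $\mu^i$ lies in the closure of $\{\mu_k:k\in N\}$.
\item The points $x_i$ are distinct, since $A_i\in x_i$ and $A_i\cap A_j=\emptyset$ for $i\neq j$.
\item Each $\nu_i$ is a cluster point in $M(\fB')$ of the sequence $(\lambda_k)_{k\in N}$. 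This sequence converges on $\fB$ and satisfies $|\lambda_k|(N)\le|\mu_k|(N\sm\{k\})<\eps/6$.
\end{itemize}
Since $\fB'\sub\la\fB\cup\cP(N)\ra$, Lemma \ref{com}(a) with $D=N$ gives $\|\nu_i-\nu_j\|\le 6\cdot\eps/6=\eps$. So the three measures $\mu^0,\mu^1,\mu^2$ form a $(c,\eps)$-fork, which is (ii).
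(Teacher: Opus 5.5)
Your proof is correct and follows essentially the same route as the paper's. You split off the atom $\mu_k(\{k\})\delta_k$, use $\pe=\con$ to get convergence on $\fB$, use Lemma \ref{ros2} for (i) and Rosenthal's lemma to bound the off-diagonal mass by $\eps/6$, then split into three infinite pieces and compare cluster points via Lemma \ref{com}(a); the remaining differences are bookkeeping (you apply Rosenthal first and partition all of $N$ rather than a subset $D\subseteq N$, and you explicitly arrange $N\cap T=\emptyset$, which the paper leaves implicit).
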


\begin{proof}
Write  $\mu_k = b_k\delta_k + \alpha_k$, where $\alpha_k(\{k\})=0$. Using $\pe=\con$ and passing to subsequences,
we can assume that

\begin{enumerate}[---]
\item the sequence of measures $(\mu_k)_{k\in S}$ converges on $\fB$;
\item  the sequence $(b_k)_{k\in S}$ converges to some $b$.
\end{enumerate}

Note that in such a case the measures $\alpha_k$ also converge on $\fB$ (since $\fB$ is trivial on $S$,
the sequence $(\delta_k)_{k\in S}$ is convergent on $\fB$).
Let us assume that $b \ge c$; the `negative' case will be symmetric.

Fix $\varepsilon>0$;
by virtue of Lemma $\ref{ros2}$, we can find infinite disjoint subsets $N, T\sub S$ so that $(\mu_k)_{k\in N}$ is norm convergent on $T$.
We now apply Lemma \ref{ros} to the sequence $(\mu_k)_{k\in N}$ to obtain an infinite set $D\sub N$
such  that $|\alpha_k|(D)\le  \varepsilon/6$  for all $ k\in D$.

Split $D$ into three infinite sets, $D=A_0 \cup A_1 \cup A_2$ and set
$\fB'=\la \fB \cup \{A_0, A_1, A_2\}\ra$.
Then $(i)$ is granted; we shall find the required $(c,\eps)$-fork.

For $i=0,1,2$, let  $x_i \in \ult(\fB')$ be the only ultrafilter containing $A_i$ but no finite sets.
Take any cluster point $\nu_i$ of the subsequence $(\alpha_k)_{k\in A_i}$.
It is clear that the measures $b\delta_{x_i}+\nu_i$ lie in the closure of $\{\mu_k: {k\in N}\}$ in $M(\fB')$,
and we have $\|\nu_i -\nu_j \|_{\fB'}\le \eps$ by Lemma \ref{com}(a), as required.
\end{proof}

\begin{theorem}\label{adf}
$[\mathfrak{p}=\con]$
There exists an almost disjoint family $\cA \sub \cP(\omega)$ such  that the algebra $\fB$ generated by
$\cA$ and finite sets has the following property:

For every $0<c<1$  and every sequence
 $(\mu_n)_n$ in  the unit ball of  $\ml(\cP(\omega))$ satisfying $|\mu_n(\{n\})| \ge c$ for  $n\in\omega$,
  the  closure of $\{\mu_k: k\in \omega\}$ in $M(\fB)$ contains  a $(c,c/2)$-fork.
\end{theorem}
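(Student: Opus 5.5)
We build $\cA$ by transfinite recursion of length $\con$, using Lemma \ref{step} at each stage. First we enumerate all relevant data in order type $\con$. Fix a countable dense set of constants, say $c$ rational in $(0,1)$. A sequence $(\mu_n)_n$ in the unit ball of $\ml(\cP(\omega))$ is a countable sequence of elements of $\ell_1$, so there are exactly $\con$ such sequences. We list all pairs consisting of such a sequence and a rational $c$ as $\{((\mu^\xi_n)_n,c_\xi):\xi<\con\}$, in which each pair appears cofinally often (or at least once).

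We construct almost disjoint families $\cA_\xi$ with $|\cA_\xi|\le|\xi|+\omega<\con$, increasing in $\xi$, and put $\fB_\xi=\la\cA_\xi\cup[\omega]^{<\omega}\ra$. At stage $\xi$, suppose the sequence $(\mu^\xi_n)$ satisfies $|\mu^\xi_n(\{n\})|\ge c_\xi$ for all $n$; otherwise skip the stage. We need an infinite $S$ on which $\fB_\xi$ is trivial. Here we use $\pe=\con$ as follows. If no such $S$ existed, every infinite set would meet some finite union of members of $\cA_\xi$ in an infinite set. To get around this, a standard choice is to take $S$ almost disjoint from all members of $\cA_\xi$; this is possible whenever $\cA_\xi$ is not maximal. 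Since $|\cA_\xi|<\con=\pe\le\mathfrak a$, the family $\cA_\xi$ is not maximal; alternatively, apply the $\pe=\con$ formulation to the complements of finite unions. If $S\cap A$ is infinite for some $A\in\cA_\xi$, we may instead take $S\sub A$, and then $\fB_\xi$ is still trivial on $S$ because the other members meet $A$ finitely. Either way we obtain $S$. We apply Lemma \ref{step} with $\eps=c_\xi/2$ to obtain $N,T,A_0,A_1,A_2$. We then set $\cA_{\xi+1}=\cA_\xi\cup\{A_0,A_1,A_2\}$. This family is still almost disjoint provided that $S$ was almost disjoint from, or almost contained in, a single member of $\cA_\xi$.

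Here is the main obstacle. If $A_i\sub S\sub A$ for some $A\in\cA_\xi$, the family fails to be almost disjoint. So we must choose $S$ almost disjoint from all of $\cA_\xi$. Given an arbitrary sequence, we first find an infinite set almost disjoint from $\cA_\xi$. Because $|\cA_\xi|<\mathfrak a$ (and $\pe\le\mathfrak a$), the family $\cA_\xi$ is not maximal, so some infinite $S_0$ is almost disjoint from every member. Then $\fB_\xi$ is trivial on $S_0$, since each element of $\fB_\xi$ is, modulo finite sets, either a finite union of members or a cofinite complement of one. We apply Lemma \ref{step} with $S=S_0$ and get $A_i\sub S_0$, so almost disjointness is preserved. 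Finally, the fork produced at stage $\xi$ must survive in the final algebra $\fB=\bigcup_\xi\fB_\xi$. Here we use clause (i), $T$, and Lemma \ref{com}(b). Each later stage adds sets which are almost disjoint from $N$-related sets, and the issue is to keep the distances $\|\nu_i-\nu_j\|$ from growing. The cleaner route is to pass at every later stage to $\fC=\fB$, and to note that $\fB\sub\la\fB_{\xi+1}\cup\cP(\omega\sm N')\ra$ in a suitable sense. We expect that here one must also record $T$ and require later sets to be chosen inside sets where the earlier sequences are norm convergent, or at least to meet $N$ finitely.

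To make the fork persist, the alternative I would try first is a compactness argument that avoids bookkeeping. Let $\fB$ be final and $(\mu_n)$ given with constant $c$. Take rational $c'<c$ close to $c$, and a stage $\xi$ handling this sequence with $c'$. Stage $\xi$ gives three points $x_i$ and measures in the closure over $\fB_{\xi+1}$. Any cluster point over $\fB$ of $(\mu_k)_{k\in A_i}$ restricts to such a measure over $\fB_{\xi+1}$. Later sets $A$ meet each $A_i$ finitely, so within $A_i$ they only cut through $\alpha_k$-mass. On $D$ this mass is at most $\eps/6$ by the Rosenthal step. Off $D$, Lemma \ref{com}(a) handles the convergence on $\fB_\xi$, and the remaining contribution has to be controlled by one more pass through Lemma \ref{ros2}. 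Combining these gives $\|\nu_i-\nu_j\|_{\fB}\le c/2$, with the point masses $b\delta_{x_i}$ intact and $|b|\ge c$ after adjusting constants. The delicate estimate is this uniform control of the tails outside $D$ against sets added later, and I expect it to be the hardest step of the proof.
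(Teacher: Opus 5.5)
Your setup is sound as far as it goes. Choosing the starting set $S$ almost disjoint from the current family makes $\fB_\xi$ trivial on $S$ and keeps $\cA$ almost disjoint. Restricting to rational constants is harmless, because $|b|\ge c$ comes from the sequence itself.

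The genuine gap is the step you flag yourself: that the fork produced at stage $\xi$ survives in the final algebra $\fB$. In your scheme this is false in general. Write $\mu_k=b_k\delta_k+\alpha_k$, and suppose $\alpha_k=\frac12\delta_{m_k}$ with $k\mapsto m_k$ injective and its range almost disjoint from $S$ and from $\cA_\xi$. Nothing prevents a later stage from starting with a set $S'\supseteq M_0=\{m_k:k\in A_0\}$. Lemma \ref{step}, applied say to $\mu'_n=\delta_n$, may then return generators with $A_0'\cup A_1'\cup A_2'=M_0$. In that case:
\begin{itemize}
\item every cluster point $\nu_0$ of $(\alpha_k)_{k\in A_0}$ in $M(\fB)$ gives $M_0$ mass $1/2$;
\item every cluster point $\nu_1$ of $(\alpha_k)_{k\in A_1}$ gives $M_0$ mass $0$;
\item hence $\|\nu_0-\nu_1\|_{\fB}\ge 1>c/2$.
\end{itemize}
The Rosenthal step only makes the $\alpha_k$-mass on $D$ small; the damage comes from mass lying outside $N$. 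Your compactness sketch cannot repair this. Lemma \ref{com}(a) only covers enlarging the algebra by subsets of $D$. Lemma \ref{ros2} gives norm convergence on a single set $T$, which helps only if all later generators are confined to $T$. No extra pass made at stage $\xi$ can anticipate the $\con$ many sets added afterwards.

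The missing idea is the bookkeeping you mention only in passing, carried out for all stages at once: make the sets $T_\xi$ from Lemma \ref{step} a $\sub^*$-decreasing tower.
\begin{itemize}
\item At stage $\xi$, use $\pe=\con$ to get an infinite $Y$ with $Y\sub^* T_\eta$ for all $\eta<\xi$.
\item Pass to $S\sub Y$ on which $(\mu^\xi_k)_k$ converges on $\fB_\xi$, and apply Lemma \ref{step} inside $S$ to get $N_\xi$, $T_\xi$ and $A^\xi_i$.
\item Since $N_\eta\cap T_\eta=\emptyset$, $S$ is almost disjoint from every earlier $N_\eta\supseteq A^\eta_i$. This gives both triviality of $\fB_\xi$ on $S$ and almost disjointness of $\cA$, with no appeal to non-maximality.
\end{itemize}
Crucially, every generator added after stage $\xi$ is $\sub^* T_\xi$, so $\fB\sub\la\fB_{\xi+1}\cup\cP(T_\xi)\ra$. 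Since $N_\xi\cap T_\xi=\emptyset$, the sequence $(\alpha_k)_{k\in N_\xi}$ is norm convergent on $T_\xi$. Lemma \ref{com}(b) then gives $\|\nu_i-\nu_j\|_{\fB}=\|\nu_i-\nu_j\|_{\fB_{\xi+1}}\le c_\xi/2$, so the fork survives. This is how the paper's proof proceeds.
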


\begin{proof}
We fix an enumeration
\[ \left\{ \left((\mu_k^\xi)_k, c_\xi\right):\xi<\con \right\},\]
 of all  sequences and constants that satisfy the assumption of the theorem.
We define inductively almost disjoint sets $N_\xi$ (room to perform the step $\xi$),
and auxiliary infinite sets $T_\xi$ (making room for future steps of the construction).
Then for every $\xi$ we shall find pairwise disjoint infinite sets $A^\xi_0,A^\xi_1,A^\xi_2\sub N_\xi$
so that
\[ \cA=\{A_\xi^i:\xi<\con, i=0,1,2\},\]
 will be the required almost disjoint family.
%$\cA_\xi=\{\cA_\eta:\eta<\xi\}
%$\cA=\bigcup_{\xi<\con}\cA_\xi$

Writing $\fB_\xi$ for the algebra generated by $\{A^\eta_i:\eta<\xi, i=0,1,2\}$ and all finite sets in $\omega$,
we do the construction so that

\begin{enumerate}[(a)]
\item $N_\eta\cap T_\xi=^\ast\emptyset$ for $\eta<\xi$ and $N_\eta, T_\eta \sub^\ast T_\xi$ for $\eta\ge\xi$;
\item the sequence $(\mu^\xi_k)_{k\in N_\xi}$ converges on the algebra $\fB_\xi$;
\item  the sequence $(\mu^\xi_k)_{k\in N_\xi}$ converges in norm on the set $T_\xi$;
\item  the closure of $\{\mu_k^\xi:k\in\omega\}$ in $M(\fB_{\xi+1})$ contains a $(c_\xi, c_\xi/2)$-fork.
\end{enumerate}

Suppose that the construction has been done for $\eta<\xi<\con$. Using $\pe=\con$, we first find an infinite set $Y\sub\omega$  such that
$Y\sub^\ast T_\eta$ for every $\eta<\xi$. Since the algebra $\fB_\xi$ verifies  $|\fB_\xi|<\con$, again by $\pe=\con$, we find $S\sub Y$ such that
the sequence $(\mu^\xi_k)_{k\in S}$ converges on $\fB_\xi$.
Note that $\fB_\xi$ is trivial on $S$, so we can apply Lemma \ref{step} with $\varepsilon=c_\xi/2$ and get $N_\xi, T_\xi\sub S$ together with $A^\xi_i\sub N_\xi$, $i=0,1,2$, and
(a) --- (d) are fulfilled.

Now it suffices to note  that the fork obtained at step $\xi$ will not be destroyed in the future; this is a direct consequence of (c) and Lemma \ref{com}(b) since
the final algebra $\fB=\bigcup_{\xi<\con} \fB_\xi$ is contained in the algebra generated by $\fB_{\xi}$ and $\cP(T_\xi)$.
\end{proof}

For the purpose of the next section we need to augment Theorem \ref{adf} to the form given below.
Let us  write $[\omega]^\omega$ for the family of all infinite subsets of $\omega$. We say that
a family $\cN\sub [\omega]^\omega$ is {\em dense in }$[\omega]^\omega$ if every infinite $A\sub \omega$ contains
some $B\in\cN$. For the sake of example, given a sequence $(x_n)_n$ in a sequentially compact space, we can think of the family $\cN$ of infinite subsets $I\subseteq \omega$ such that $(x_n)_{n\in I}$ is convergent.

%every dense family $\cN\sub [\omega]^\omega$

\begin{theorem}\label{adf2}
$[\mathfrak{p}=\con]$
Let $\{ \cN_\alpha:\alpha<\con \}$ be a certain list of families that are dense in $[\omega]^\omega$.

There exists an almost disjoint family $\cA \sub \cP(\omega)$ such  that the algebra $\fB$ generated by
$\cA$ and finite sets has the following property:

For every $0<c<1$  and every sequence
 $(\mu_n)_n$ in  the unit ball of  $\ml(\cP(\omega))$ satisfying $|\mu_n(\{n\})| \ge c$ for  $n\in\omega$,
 {\bf and for every} $\alpha<\con$,
  the  closure of $\{\mu_k: k\in I\}$ in $M(\fB)$ contains  a $(c,c/2)$-fork, where $I\sub N$ for some $N\in\cN_\alpha$.
\end{theorem}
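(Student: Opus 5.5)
The plan is to rerun the transfinite construction from the proof of Theorem \ref{adf}, changing only the enumeration. Enumerate all triples
\[ \left\{ \left((\mu_k^\xi)_k, c_\xi, \alpha_\xi\right):\xi<\con \right\}, \]
where $(\mu^\xi_k)_k$ and $c_\xi$ satisfy the hypothesis of the theorem and $\alpha_\xi<\con$. This is possible since there are only $\con$ sequences in the unit ball of $\ml(\cP(\omega))\cong\ell_1$, $\con$ constants and $\con$ indices $\alpha$. As before, we build almost disjoint sets $N_\xi$, auxiliary sets $T_\xi$, and pairwise disjoint infinite sets $A^\xi_0,A^\xi_1,A^\xi_2\sub N_\xi$. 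We maintain conditions (a)--(c) of that proof, together with a modified (d): there is an infinite $I_\xi\sub N'_\xi$ for some $N'_\xi\in\cN_{\alpha_\xi}$ such that the closure of $\{\mu^\xi_k:k\in I_\xi\}$ in $M(\fB_{\xi+1})$ contains a $(c_\xi,c_\xi/2)$-fork.

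At stage $\xi$, first use $\pe=\con$ exactly as before to find an infinite $Y$ with $Y\sub^\ast T_\eta$ for all $\eta<\xi$. This uses that $\{T_\eta:\eta<\xi\}$ is decreasing mod finite by (a), so it has the strong finite intersection property. Next, and this is the only new ingredient, use density of $\cN_{\alpha_\xi}$ to pick $N'_\xi\in\cN_{\alpha_\xi}$ with $N'_\xi\sub Y$. Then, since $|\fB_\xi|<\con$, apply $\pe=\con$ to get an infinite $S\sub N'_\xi$ on which $(\mu^\xi_k)_{k\in S}$ converges on $\fB_\xi$. This is the fact quoted in \S\ref{pe}: the compact set $M_1(\fB_\xi)$ has weight $<\con$, hence is sequentially compact. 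Since $S\sub^\ast T_\eta$ and $T_\eta$ is almost disjoint from every $N_\zeta$ with $\zeta<\eta$, while $\fB_\xi$ is generated by subsets of such $N_\zeta$ ($\zeta<\xi$) and finite sets, $\fB_\xi$ is trivial on $S$. Now Lemma \ref{step} with $\eps=c_\xi/2$, applied to $(\mu^\xi_k)_{k\in S}$, yields $N_\xi,T_\xi\sub S$ and $A^\xi_i\sub N_\xi$. We put $I_\xi=N_\xi\sub S\sub N'_\xi$. The lemma provides the fork in the closure of $\{\mu^\xi_k:k\in N_\xi\}$ in $M(\fB_{\xi+1})$, together with the norm convergence (c) on $T_\xi$; condition (a) holds because $N_\xi,T_\xi\sub Y$.

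Finally, we check that forks survive. The final algebra $\fB$ is contained in $\la\fB_{\xi+1}\cup\cP(T_\xi)\ra$, because every later $A^\eta_i\sub N_\eta\sub^\ast T_\xi$ for $\eta>\xi$, modulo finite sets, which lie in $\fB_{\xi+1}$. By (c), $(\mu^\xi_k)_{k\in I_\xi}$ is norm convergent on $T_\xi$. Lemma \ref{com}(b), applied with $D=T_\xi$, shows that distances between cluster points measured in $\fB$ equal those measured in $\fB_{\xi+1}$.

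The main obstacle is this last step. A fork in $M(\fB_{\xi+1})$ must lift to a fork in $M(\fB)$: the three points $x_i$ must stay distinct and the decomposition $d\delta_{x_i}+\nu_i$ must persist. We handle it as in Theorem \ref{adf}. Each $x_i$ is taken as a cluster point along $A^\xi_i$, so it is the ultrafilter containing $A^\xi_i$, and these remain distinct in $\ult(\fB)$. The $\nu_i$ are taken as cluster points in $M(\fB)$ of $(\alpha_k)_{k\in A^\xi_i}$, whose mutual distances are controlled by Lemma \ref{com}(b) together with the $\fB_{\xi+1}$ estimate from Lemma \ref{com}(a). Every $(\mu_n)$, $c$ and $\alpha$ occurs as some $\xi$, so the theorem follows.
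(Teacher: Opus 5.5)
Your proposal is correct and follows the paper's proof. The paper likewise reruns the induction of Theorem \ref{adf} over all pairs (sequence, $\alpha$) enumerated in type $\con$, changing only the start of Lemma \ref{step} so that one may additionally take $S\in\cN_\alpha$; you instead choose $N'_\xi\in\cN_{\alpha_\xi}$ inside $Y$ first and then shrink to $S\subseteq N'_\xi$, which is equally fine since only $I\subseteq N$ with $N\in\cN_\alpha$ is required, and your survival argument via $\la\fB_{\xi+1}\cup\cP(T_\xi)\ra$ and Lemma \ref{com}(a),(b) is the same one the paper invokes, spelled out in more detail.
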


\begin{proof}
As before, we consider all the sequences of measures and constants in question
\[ \left\{ \left((\mu_k^\xi)_k, c_\xi\right):\xi<\con \right\},\]
together with the given list $\la \cN_\alpha:\alpha<\con\ra$ and do the induction over all pairs $(\alpha,\xi)$ (ordered in type $\con$).
The only modification that is needed: when we consider the sequence $(\mu^\xi_k)_k$ and the family $\cN_\alpha$ we apply
Lemma \ref{step} and, at the beginning of its proof, we can additionally assume that $S\in \cN_\alpha$.
\end{proof}

\section{The construction --- A twisted sum of $c_0$ and $C(K)$}
We now turn to the construction of a twisted sum $0\to c_0 \to X \to C(K)\to 0$, for an Eberlein compactum $K$ of weight $\con$ in which $X$  is not a $\cC$-space.
\par Let $K$ be a compact space. Every short exact sequence
\[ \begin{tikzcd}
0 \arrow[r] & c_0 \arrow[r] & X \arrow[r]  & C(K) \arrow[r] & 0
\end{tikzcd} \]
is equivalent to one arising from a countable discrete extension in the following way, see \cite{amp} for further details. Take the dual unit ball $M_1(K)$ and consider a countable discrete extension $L= M_1(K)\cup\; \omega$. The space $C(K)$ canonically embeds into $C(M_1(K))$. Now consider the following diagram:
\[ \begin{tikzcd}
0 \arrow[r] & c_0 \arrow[r] \arrow[d, equal] & C(L) \arrow[r]  & C\big(M_1(K)\big) \arrow[r] & 0 \\
0 \arrow[r] & c_0 \arrow[r]           & PB \arrow[r]  \arrow[u,hook]         & C(K) \arrow[r]    \arrow[u, hook, "\iota"]       & 0
\end{tikzcd} \]
The pull-back space $PB$ is precisely our twisted sum $X$. Explicitly,
\[ X=\{f\in C(L): \exists g\in C(K) : f(\mu)=\mu(g) \ \forall \mu\in M(K)\}.\]
In other words, $X$ is the space of all possible extensions of a function $g\in C(K)$  to a continuous function $\wt{g}\in C(L)$. From now on, given $g\in C(K)$ we will write $\wt{g}$ to say that we treat $g$ as a function on $M_1(K)$.
%\par We have a natural embedding $c_0$ onto the subspace of $X$ of functions vanishing on $M_1(K)$ and the quotient $X/c_0$ is $C(K)$.

\par From the diagram above it also follows that $X^\ast$ may be identified with $M(K) \oplus \ell_1$. This is because the dual of the second row
\[ \begin{tikzcd}
0 \arrow[r] & M(K) \arrow[r] & X^* \arrow[r]  & \ell_1 \arrow[r] & 0
\end{tikzcd} \]
must split \cite[p. 77]{DJT}.
An explicit isomorphism can be given using the existence of barycenters of measures:
if $\vf\in X^\ast$, we may think that $\vf$ is represented by some measure $\Lambda\in M(L)$. Write $\Lambda=\Lambda_1+\Lambda_2$ where $\Lambda_1$ is concentrated on $M_1(K)$ and
$\Lambda_2$ is concentrated on $\omega$. Then it is clear that $\Lambda_2\in \ell_1$; on the other hand, $\Lambda_1$ is a measure on the compact convex set $M_1(K)$ so it has a barycenter $\mu\in M_1(K)$, i.e.\  that $\Lambda_1(\wt{g})=\mu(g)$ for $g\in C(K)$.

 Consider now an Eberlein compact space $K$ of weight $\con$. Recall that in such a case, $M_1(K)$ is also Eberlein compact in its $weak^\ast$ topology, and in particular, it is a sequentially compact space which satisfies $|M_1(K)|=\con$. This fact can be derived from the classical Amir-Lindenstrauss theorem \cite[Thm. 2]{amir-lin}; see also  Negrepontis  \cite{Ne84}.

We shall now use the almost disjoint family $\cA$ from the previous section to construct a certain countable discrete extension $L$ of $M_1(K)$ so that the corresponding space $X$ cannot be a $\cC$-space. Our first step is to locate a copy of $\bA(\con)$ in $M_1(K)$. 

\begin{lemma} 
	Let $K$ be an Eberlein compactum of weight $\mathfrak c$. 
	Then there are $\sigma_\alpha\in M_1(K)$ such that the set
	$\Sigma=\{\sigma_\alpha, -\sigma_\alpha: \alpha<\con\} \cup \{0\}$
	is  a copy of $\bA(\con)$ inside $M_1(K)$.
\end{lemma}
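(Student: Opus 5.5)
We need to find $\sigma_\alpha\in M_1(K)$, $\alpha<\con$, such that $\sigma_\alpha\to 0$ weak$^\ast$ in the sense that every weak$^\ast$ neighbourhood of $0$ contains all but finitely many of the points $\pm\sigma_\alpha$, and such that the points $\pm\sigma_\alpha$ are distinct and nonzero. Then $\Sigma$ is compact, every point other than $0$ is isolated (a neighbourhood of $0$ missing $\pm\sigma_\alpha$ excludes it, and by Hausdorffness each point is isolated from the finitely many others), and $|\Sigma|=\con$, so $\Sigma\cong\bA(\con)$.

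\textbf{Step 1: embed $K$ in a standard Eberlein model.} By the Amir--Lindenstrauss theorem, $K$ embeds homeomorphically into $(B_{c_0(\Gamma)},weak)$ for some set $\Gamma$. Since the weight of $K$ is $\con$, we may take $|\Gamma|=\con$ and assume the coordinate functionals $e_\gamma^\ast$ separate points of $K$. Thus $K\subseteq B_{c_0(\Gamma)}$, and for every $\eps>0$ and $x\in K$ the set $\{\gamma:|x(\gamma)|\ge\eps\}$ is finite.

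\textbf{Step 2: choose the measures.} We want $\con$ many pairs of points $x_\alpha\neq y_\alpha$ in $K$ and set
\[\sigma_\alpha=\tfrac12(\delta_{x_\alpha}-\delta_{y_\alpha})\in M_1(K).\]
We need $\sigma_\alpha(g)\to0$ for every $g\in C(K)$, which means: for each $g$ and $\eps>0$, only finitely many $\alpha$ satisfy $|g(x_\alpha)-g(y_\alpha)|\ge\eps$. By Stone--Weierstrass, it suffices to check this for $g$ ranging over the algebra generated by the coordinate functions $x\mapsto x(\gamma)$, and then for polynomials in finitely many coordinates $\gamma_1,\dots,\gamma_m$. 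Those depend only on the coordinates in $F=\{\gamma_1,\dots,\gamma_m\}$. So it is enough to arrange that for each finite $F\subseteq\Gamma$ and each $\eps>0$, all but finitely many $\alpha$ have $|x_\alpha(\gamma)-y_\alpha(\gamma)|<\eps$ for every $\gamma\in F$. Uniform continuity of polynomials on the bounded cube then does the rest.

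\textbf{Step 3: the combinatorial choice, which is the main obstacle.} A first attempt is to take $x_\alpha, y_\alpha$ differing only on a ``fresh'' coordinate. Concretely, we want pairwise disjoint finite sets $F_\alpha\subseteq\Gamma$ with $x_\alpha|_{\Gamma\sm F_\alpha}$ close to $y_\alpha|_{\Gamma\sm F_\alpha}$. Since each $\gamma$ lies in at most one $F_\alpha$, the condition in Step 2 holds up to the closeness error.

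A cleaner way is to use weight $\con$ directly. Pick $\con$ many distinct points $x_\alpha$ in $K$ (we have $|K|=\con$, since the weight of an infinite Eberlein compactum is at least its cardinality's countable-power bound). Let $y_\alpha$ be points such that $\sigma_\alpha=\delta_{x_\alpha}-\delta_{y_\alpha}$ tends to $0$, so that coordinates with $|x_\alpha(\gamma)-y_\alpha(\gamma)|\ge\eps$ form finite sets $E_\alpha^\eps$.

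Using a $\Delta$-system argument is the key trick. Each $\operatorname{supp}_\eps$ is finite, so among $\con$ many (note $\con$ is regular or not, so argue per $\eps=1/n$ with countably many refinements) we can pass to $\con$ many indices whose sets $E^{1/n}_\alpha$ form a $\Delta$-system. Alternatively, and more simply, we can choose the $x_\alpha$ from a discrete-in-coordinates family and set $y_\alpha=0$ when $0\in K$.

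Indeed, we may assume $0\in K$ by translating the embedding. If we can find $\con$ many $x_\alpha\in K$ with pairwise disjoint supports $\{\gamma: x_\alpha(\gamma)\neq0\}$, then $\delta_{x_\alpha}-\delta_0\to0$ weak$^\ast$, and $\sigma_\alpha=\frac12(\delta_{x_\alpha}-\delta_0)$ works. Finding such disjointly supported points is where weight $\con$ is needed. I would derive it from the fact that weight $\con$ means the coordinate set cannot be covered by the supports of fewer than $\con$ points, combined with a transfinite choice of points whose $\eps$-supports avoid the previously used countable sets. Exact disjointness may fail, so I would accept approximate disjointness: fix a countable sequence $\eps_n\to0$ and at each stage make the $\eps_n$-supports avoid earlier ones. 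This approximate disjointness still suffices for Step 2.

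Finally, distinctness of the $\pm\sigma_\alpha$ and $\sigma_\alpha\ne0$ are immediate from $x_\alpha\ne 0$ and $x_\alpha\neq x_\beta$.
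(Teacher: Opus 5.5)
Your Step 2 reduction is correct, and it is the same Stone--Weierstrass reduction the paper uses. However, the construction you finally commit to in Step 3 cannot work.

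\textbf{Why the fixed base point fails.} With $y_\alpha=0$ fixed, your Step 2 condition says exactly that every basic neighbourhood $\{z\in K:|z(\gamma)|<\eps,\ \gamma\in F\}$ of $0$ contains all but finitely many $x_\alpha$. So you would be producing a copy $\{x_\alpha:\alpha<\con\}\cup\{0\}$ of $\bA(\con)$ inside $K$ itself. An Eberlein compactum of weight $\con$ need not contain one. Take the Alexandroff duplicate of the Cantor set, realized as $\{y_b,\ y_b+e_b: b\in 2^\omega\}\sub c_0(2^{<\omega}\cup 2^\omega)$ with $y_b=\sum_n 2^{-n}e_{b|n}$. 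It is Eberlein, has weight $\con$, and is first countable. In a first countable space no point $p$ is such a limit of even $\aleph_1$ distinct points: if $(U_n)$ is a countable base at $p$ and each $U_n$ misses only finitely many of them, then all but countably many lie in $\bigcap_n U_n=\{p\}$. Hence no transfinite choice of points whose supports, or $\eps_n$-supports, avoid the earlier countable sets can exist in general, whatever translation you use. Weight $\con$ simply does not give you a point of $K$ that is small on a prescribed set of fewer than $\con$ coordinates. The $\Delta$-system paragraph does not help either, since it presupposes the $y_\alpha$ with $\sigma_\alpha\to0$ that you are trying to construct.

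\textbf{The missing idea.} Let both points move with $\alpha$, and use weight $\con$ as follows. Put $I_\alpha=\bigcup_{\beta<\alpha}(\supp x_\beta\cup\supp y_\beta)$; since supports are countable, $|I_\alpha|<\con$. So the coordinates in $I_\alpha$ cannot separate the points of $K$, for otherwise $K$ would embed into $\mathbb{R}^{I_\alpha}$ and have weight $<\con$. Hence there are $x_\alpha\neq y_\alpha$ in $K$ agreeing on $I_\alpha$. Then $x_\alpha(\gamma)\neq y_\alpha(\gamma)$ forces $\gamma\in I_{\alpha+1}\sm I_\alpha$. So for each $\gamma$ at most one $\alpha$ has $x_\alpha(\gamma)\neq y_\alpha(\gamma)$, which is precisely your Step 2 condition with no $\eps$ needed; this is the paper's proof.

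Your ``first attempt'' was close to this, with $F_\alpha=\supp(x_\alpha-y_\alpha)$, but three things were missing. These sets are countable rather than finite. Agreement outside them must be exact, since a fixed error $\eps$ only yields your condition for that $\eps$. And you gave no way of producing the pairs. For the duplicate above one can take $x_\alpha=y_{b_\alpha}+e_{b_\alpha}$ and $y_\alpha=y_{b_\alpha}$ for distinct $b_\alpha$. In general, the copy of $\bA(\con)$ lives among differences $\frac12(\delta_x-\delta_y)$ in $M_1(K)$, not among Dirac measures.
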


\begin{proof} We can assume that $K$ is a weak compact set of $c_0(\con)$; 
	then for every $x\in K$  the set $\supp x=\{\xi<\con : x(\xi)\neq 0\}$ is countable.
	
	For $\alpha<\con$ we inductively define a pair of elements $x_\alpha, y_\alpha$ in $K$ 
	and a set $I_\alpha$ so that
	
	\begin{enumerate}[(i)]
		\item $I_\alpha=\bigcup_{\beta<\alpha} \left( \supp{x_\beta} \cup \supp{y_\beta}\right)$;
		\item $x_\alpha\neq y_\alpha$ but $x_\alpha(\xi)=y_\alpha(\xi)$ for  every $\xi\in I_\alpha$.
	\end{enumerate}
	
	To carry out the construction  just note that for  $\alpha<\con$ we have  $|I_\alpha|<\con$ so, 
	by  the assumption that $K$ has weight $\con$,
	there is  a pair of points $x_\alpha \neq y_\alpha$ in $K$ which agree on $I_\alpha$. 
	
	Now set
	\[ \sigma_\alpha=\frac12(\delta_{x_\alpha} - \delta_{y_\alpha}) \ , \ \alpha < \con;\]
	we will show that $\Sigma=\{\sigma_\alpha, -\sigma_\alpha: \alpha<\con\} \cup \{0\}$ is a copy of $\bA(\con)$ in $M_1(K)$. 
	
	It is easy to see that $\sigma_\alpha$ and $-\sigma_\alpha$ are isolated in $\Sigma$.
	Since the functions $f_S = \prod_{\xi\in S} \pi_\xi$, where $S\subset I$ is a finite set, 
	span a subalgebra of $C(K)$ which separates the points in $K$, it suffices to check that for every such $S$, 
	we have 
	\[(*)\quad \int_K f_S\;{\rm d}\sigma_\alpha \neq 0,\]
	for at most one $\alpha$.
	
	Indeed, if $S$ is not contained in $\bigcup_{\alpha<\con}  I_\alpha$  then clearly $\int_K f_S\;{\rm d}\sigma_\alpha = 0$
	for all $\alpha$;
	otherwise,  take $\alpha_0=\min\{\alpha<\con: S\sub I_\alpha\}$ and note
	that $\int_K f_S\;{\rm d}\sigma_\beta = 0$ for all $\beta>\alpha_0$ by $(ii)$
	and for all $\beta<\alpha_0$ by $(i)$.
\end{proof}

\begin{theorem}\label{main} $[\pe=\con]$. If $K$ is an Eberlein compactum of weight $\con$, then there is a non-trivial twisted sum
\[ 0 \longrightarrow c_0 \longrightarrow X \longrightarrow C(K) \longrightarrow 0,\]
where the Banach space $X$ is not a $\cC$-space.
\end{theorem}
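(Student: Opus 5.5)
We build the countable discrete extension $L=M_1(K)\cup\omega$ from the almost disjoint family $\cA$ of Theorem \ref{adf2}, and then show that $B_{X^\ast}$ has no free $c$-norming subset. By Lemma \ref{p:1} this means $X$ is not a $\cC$-space.

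\emph{Construction of $L$.} Use the copy $\Sigma$ of $\bA(\con)$ from the previous lemma. Since $|\cA|=\con$, fix an injection $A\mapsto \alpha(A)$ of $\cA$ into $\con$. Define $\varphi:K_\cA\to M_1(K)$ by $\varphi(p_A)=\sigma_{\alpha(A)}$ and $\varphi(\infty)=0$. On $\omega$, choose $\varphi(n)=\pm\sigma_{\alpha(A)}$ with signs so that each $A$ carries points tending to both $\sigma_{\alpha(A)}$ and $-\sigma_{\alpha(A)}$; alternatively, duplicate $\omega$ to get enough room. Let the topology on $L$ be the one in which $n\to$ the points of $\Sigma$ as prescribed by $K_\cA$. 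Concretely, declare a sequence $n_k\in A$ to converge to $\varphi(p_A)$. Continuity is checked against the basic neighbourhoods of $p_A$ and against the fact that $\Sigma\cong\bA(\con)$. The pull-back $X$ then contains functions whose behaviour on $\omega$ is governed by $\fB$, the algebra generated by $\cA$ and finite sets.

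Non-triviality will follow because the restriction to $\omega\cup\Sigma$ reproduces $C(K_\cA)$, which is a nontrivial twisted sum of $c_0$ and $c_0(\con)$ (cited earlier). This part needs a pull-back/restriction argument, which I take as routine.

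\emph{Main argument.} Suppose $F\sub B_{X^\ast}$ is $weak^\ast$ compact, free and $c$-norming. Since $\omega$ is open in $L$, each $\delta_n$ ($n\in\omega$) must be detected by $F$. Because $c_0\sub X$, the unit vectors $e_n\in X$ give norming points $f_n\in F$ with $|f_n(e_n)|\ge c$. Write $f_n=\mu_n+\lambda_n$ with $\mu_n\in M(K)$ and $\lambda_n\in\ell_1$, using the splitting $X^\ast\cong M(K)\oplus\ell_1$. The $\ell_1$ parts satisfy $|\lambda_n(\{n\})|\ge c$.

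Use sequential compactness of $M_1(K)$ (it is Eberlein) to let $\cN_\alpha$ range over the families of $I$ for which $(\mu_n)_{n\in I}$ converges. There are at most $\con$ such enumerated sequences, so they can be listed as the $\cN_\alpha$ in Theorem \ref{adf2}. Theorem \ref{adf2} then gives $I\sub N\in\cN_\alpha$ such that the closure of $\{\lambda_k:k\in I\}$ in $M(\fB)$ contains a $(c,c/2)$-fork $d\delta_{x_i}+\nu_i$. Along $I$ the $M(K)$-parts converge to a single $\mu$. The cluster points in $F$ are therefore three distinct functionals $x_i^\ast$ of the form $\mu+d\delta_{x_i}+\nu_i$, reinterpreted in $X^\ast$ via the identification of $\fB$-ultrafilters with points of $L$.

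\emph{Main obstacle.} The hard step is converting the fork into a forbidden $c$-triple, i.e.\ showing
\[\bigl\|x_0^\ast-\tfrac12(x_1^\ast+x_2^\ast)\bigr\|<c\]
in the norm of $X^\ast$. The $\nu$ parts contribute at most $c/2$. The atoms $d\delta_{x_i}$ sit at points of $L$ that map to $\pm\sigma$'s, and in $X^\ast$ they act as barycentric measures. So one needs the three limit points to give functionals whose $d\delta$ parts largely cancel, e.g.\ if they correspond to $\sigma,-\sigma,\ldots$ or to points whose images coincide. This must be forced by the choice of signs in $\varphi$, arranging each $A^\xi_i$ appropriately. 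The constants may need adjusting, e.g.\ replacing $c/2$ by a smaller $\eps$ in Lemma \ref{step}. Once the strict inequality holds, Lemma \ref{p:2} contradicts freeness of $F$.
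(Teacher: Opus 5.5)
There is a genuine gap, and it lies exactly at the step you call the main obstacle. The whole proof depends on how $K_\cA$ is glued to $M_1(K)$. Your construction of $L$ does not supply a gluing that works, and no choice of signs can repair it.

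Points of $\omega$ are isolated in $L$ and are not mapped into $M_1(K)$, so ``$\varphi(n)=\pm\sigma_{\alpha(A)}$'' has no meaning. Every infinite subset of $A$ converges in $L$ to the single image of $p_A$, so $A$ cannot carry points tending to both $\sigma_{\alpha(A)}$ and $-\sigma_{\alpha(A)}$. Under the rule you do state, $p_A\mapsto\sigma_{\alpha(A)}$ injectively, the three fork atoms land on three different measures $\sigma_\alpha,\sigma_\beta,\sigma_\gamma$. Then $x_0^\ast-\frac12(x_1^\ast+x_2^\ast)$ contains the term $d\bigl(\sigma_\alpha-\frac12(\sigma_\beta+\sigma_\gamma)\bigr)$, which has no reason to have norm below $c$; it equals $2|d|\ge 2c$ when the six underlying points of $K$ are distinct. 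Shrinking $\eps$ in Lemma \ref{step} does not help, because the obstruction comes from the atoms, not from the $\nu_i$.

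The missing idea is that the fork given by Theorem \ref{adf2} sits at specific, predictable points. By its proof (Lemma \ref{step}), the three atoms all carry the same coefficient $b$. They sit at $x(A^0_\xi),x(A^1_\xi),x(A^2_\xi)$, the three sets produced at a single step $\xi$. The paper exploits this by indexing $\Sigma$ by the steps of the construction and gluing as follows:
\begin{enumerate}[(i)]
\item $x(A^1_\xi)$ is identified with $\sigma_\xi$;
\item $x(A^2_\xi)$ is identified with $-\sigma_\xi$;
\item $x(A^0_\xi)$, for every $\xi$, is identified with $0$.
\end{enumerate}
This non-injective map from $\{x(A):A\in\cA\}\cup\{\infty\}\cong\bA(\con)$ onto $\Sigma$ is continuous, so $L$ is a legitimate countable discrete extension. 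Let $\mu$ be the weak$^\ast$ limit of the $M(K)$-parts along $I$ and put $\nu_i'=q[\nu_i]$. Then the closure of $\{f_n:n\in I\}$ contains $\mu+b\sigma_\xi+\nu_1'$, $\mu-b\sigma_\xi+\nu_2'$ and $\mu+\nu_0'$. The atoms cancel exactly, so $x_0^\ast-\frac12(x_1^\ast+x_2^\ast)=\nu_0'-\frac12(\nu_1'+\nu_2')$, which has norm at most $c/2<c$. The constant $c/2$ of Theorem \ref{adf2} needs no adjustment, and Lemma \ref{p:2} applies.

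Non-triviality needs no restriction argument; after these identifications $\omega\cup\Sigma$ is not $K_\cA$ anyway. A split sequence would give $X\cong c_0\oplus C(K)$, which is a $\cC$-space.
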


\begin{proof}
%Let $\{p_\xi: \xi<\con\}\sub K$ be a discrete set such that its closure $\{p_\xi: \xi<\con\}\cup \{p\}$ is a copy of $\bA(\con)$ inside $K$.
%Set
%\[ \sigma_\xi=1/2\left(\delta_{p_{\xi}} - \delta_{p_{\xi+1}}\right)\in M_1(K),\]
%for $\xi<\con$. Note that $\Sigma=\{\sigma_\xi , -\sigma_\xi: \xi<\con\}\cup\{0\}$ is a copy of $\bA(\con)$ inside $M_1(K)$.

\par The hypothesis $|M(K)|=\con$ allows us to enumerate all the sequences in $M_1(K)$ as $\{( \lambda^\alpha_n)_n: \alpha<\con\}$.
For a given $\alpha$, let $\cN_\alpha\sub [\omega]^\omega$ be the family of those infinite $I\sub\omega$ for which
the subsequence $(\lambda^\alpha_n)_{n\in I}$ is $weak^\ast$ convergent. Note that every family $\cN_\alpha$ is dense in
 $[\omega]^\omega$ since $M_1(K)$ is sequentially compact in its $weak^\ast$ topology.

 We apply Theorem \ref{adf2} with $\la \cN_\alpha:\alpha<\con\ra$ defined above, and
 consider the almost disjoint family $\cA= \{A_\xi^i:\xi<\con, i=0,1,2\}$ constructed in  \ref{adf2}.
The points in $K_\cA'$ corresponding to $A\in\cA$ will be denoted by $x(A)$, i.e.\
$x(A)\in\ult (\fB)$ is the only ultrafilter on $\fB$ containing $A$ and no finite set.

We now define a countable discrete extension $L=M_1(K)\cup\omega$. For this purpose, let $\Sigma = \{\sigma_\xi, -\sigma_\xi, 0\}$ be a copy of $\bA(\con)$ as in the previous lemma, and also consider a (disjoint) union of $M_1(K)\cup\K_\cA$. Our space $L$ is the result of gluing $\Sigma$ with $K_\cA'$ as follows:
\begin{enumerate}[--]
\item $x(A_\xi^1)$ is identified with $\sigma_\xi$;
\item $x(A_\xi^2)$ is identified with $-\sigma_\xi$;
\item $x(A_\xi^0)$ is identified with $0\in M_1(K)$.
\end{enumerate}

Write $q: M_1(K)\cup  K_\cA\to M_1(K)\cup\omega$ for the corresponding quotient map.
 Let us show that the twisted sum $X$ of $c_0$ and $C(K)$ given by such a discrete extension of $M_1(K)$, as described at the beginning of this section,
 cannot be a $\cC$-space.
 Let $F$ be a $c$-norming subset of $B_{X^\ast}$ for some $c>0$. Then $F$  must contain a sequence $(\mu_n)_n$ such that
 $|\mu_n\{n\}|\ge c$ for every $n$. Since $X^*=M(K)\oplus \ell_1$, we can decompose $\mu_n = \mu_n^K + \mu_n^\omega$ into measures supported by
  $K$ and $\omega$, respectively.

 The main point is that there is $I\sub\omega$ such that simultaneously,
 \begin{enumerate}[(i)]
     \item the closure of $\{\mu_n^\omega: n\in I\}$ in $M(K_\cA)$ contains a $(c,c/2)$-fork, and
     \item the sequence $(\mu^K_n)_{n\in I}$ is $weak^\ast$ convergent in $M(K)$ to some $\mu$.
	\end{enumerate}

 Let the fork mentioned above consist of the measures
 \[ b \delta_{x_i}+\nu_i \in M(K_\cA) \mbox{  for  } i=0,1,2,\]
 where we write $x_i=x(A_\xi^i)$ for simplicity.
 Observing the way $K_\cA$ and $K$ are glued together we conclude that the closure of $\{\mu_n: n\in\omega\}$  in $M(L)$ contains the following triple
\[ %\mbox{T}\quad
b\sigma_\xi +\nu_1', \quad -b\sigma_\xi+\nu_2',\quad  \nu_0';\]
here $\nu_i'=q[\nu_i]$ is the image of $\nu_i$ under the quotient map $q$. Recall that
$\|\nu_i-\nu_j\|\le c/2$ for $i,j=0,1,2$;
hence $\|\nu_i'-\nu_j'\|\le c/2$ for $i,j=0,1,2$, and therefore
\[ \left\|1/2\big((b\sigma_\xi +\nu_1' + \mu)+(  -b\sigma_\xi+\nu_2'+\mu)\big)-(\nu_0'+\mu)\right\|=\|1/2(\nu_1'+\nu_2')-\nu_0'\|\le c/2.\]
We have found a forbidden $c$-triple (see Definition \ref{p:3}).

It follows that  $F$ is not a free set;
by virtue of Lemma \ref{p:1}, $X$ is not a $\cC$-space, and the proof is complete.
\end{proof}

\section{Properties of twisted sums of $C(K)$-spaces}

To be a $\cC$-space is not a $3$-space property: indeed, there exists twisted sums of $c_0$ and $C(K)$ that are not $\cC$-spaces when either $K$ is an Eberlein compactum of weight $\con$ (under $\mathfrak p= \mathfrak c$, Theorem \ref{main}) or when $K= \beta \omega\setminus \omega$ by \cite{ben}. It is an open problem \cite[Remark p.67]{kaltCC} whether $\beta \omega$ can also be added to this list. When $c_0$ is replaced by a more complex $\cC$-space then more examples are known: in \cite{ccky}, there are presented examples of exact sequences $0 \longrightarrow C(\omega^\omega) \longrightarrow Z \longrightarrow c_0 \longrightarrow 0$ and $0 \longrightarrow C[0,1] \longrightarrow Z \longrightarrow X \longrightarrow 0$ with strictly singular quotient map for every $X$ not containing $\ell_1$. That prevents $Z$ from having Pe\l czy\'nski's property $(V)$ and therefore from being a $\cC$-space.

Let us focus on twisted sum spaces $Z$ of $c_0$ and $c_0(\aleph)$. In \cite[\S4]{v} it was shown that every twisted sum of $c_0$ and a Banach space with Pe\l czy\'nski's property $(V)$ also has property $(V)$. We improve that result by showing:

\begin{theorem} Every twisted sum of $c_0$ and a Lindenstrauss space is isomorphic to a Lindenstrauss space. Consequently, it has property $(V)$.
\end{theorem}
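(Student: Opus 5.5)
The plan is to use the characterization of Lindenstrauss spaces as those Banach spaces whose dual is isometric to an $L_1(\mu)$-space, together with the fact (recalled in Section 4) that an exact sequence $0\to c_0\to X\to Z\to 0$ dualizes to $0\to Z^\ast\to X^\ast\to \ell_1\to 0$, which splits because $\ell_1$ is projective. Hence $X^\ast\simeq Z^\ast\oplus\ell_1$ is isomorphic to an $L_1$-space whenever $Z$ is a Lindenstrauss space. This only gives that $X$ is an \emph{isomorphic} $\mathcal L_\infty$-space; the statement asks for isomorphism to a Lindenstrauss space, so a renorming of $X$ is needed. I would therefore not stop at duality and instead imitate the representation of the sequence used in Section 4.

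\emph{Step 1: a $C(L)$-type model.} Let $Z$ be Lindenstrauss and put $K=(B_{Z^\ast},weak^\ast)$. Then $Z$ embeds isometrically into the space $A(K)$ of affine continuous functions, and indeed $Z\cong A_0(K)=\{f\in A(K): f(-z^\ast)=-f(z^\ast)\}$. Since $c_0$ is separably injective with constant $2$, the result of \cite{amp} shows, as for $C(K)$ at the start of Section 4, that every twisted sum of $c_0$ and $Z$ is equivalent to a pull-back along $Z\hookrightarrow C(K)$ of a countable discrete extension $L=K\cup\omega$. Concretely, $X$ is isomorphic to
\[ \{f\in C(L): f|_K\in A_0(K)\},\]
equipped with the sup norm over $L$. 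One can also arrange symmetry, letting $\omega$ consist of pairs $\pm n$ with $f(-n)=-f(n)$ and $L$ carrying the involution $z^\ast\mapsto -z^\ast$ on $K$.

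\emph{Step 2: identify the dual.} The dual of this function space consists of (odd) measures on $L$ modulo the annihilator. Measures on $K$ are reduced through their barycenters, exactly as in the paper's argument $X^\ast=M(K)\oplus\ell_1$. The resulting quotient is $Z^\ast\oplus_1\ell_1$ with the norm $\inf\{\|\Lambda\|\}$, and I would try to show that this norm is exactly the $\ell_1$-sum norm. An $\ell_1$-sum of $L_1$-spaces is an $L_1$-space, so $X$ with the sup norm would be an isometric Lindenstrauss space. The key computation is that for $\varphi=(z^\ast, a)$ a representing measure is $\delta$-like on $K$ plus $a$ on $\omega$, and that no cancellation can reduce the norm. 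This uses the fact that $\omega$ is open and discrete in $L$, so the mass on $\omega$ is determined by restriction.

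\emph{Step 3: consequence.} Lindenstrauss spaces are $\mathcal L_\infty$-spaces and, by a theorem of Pełczyński (and Johnson–Zippin), $C(K)$-like spaces and Lindenstrauss spaces have property $(V)$. Property $(V)$ is an isomorphic invariant, so it transfers to $X$.

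The main obstacle is Step 2, namely justifying that the quotient norm on $X^\ast$ coincides with the $\ell_1$-sum norm. The issue is that a measure on $K$ can also represent functionals through points of $K$ that are limits of $\omega$, and the restriction of functions to $K$ is only affine, not arbitrary continuous. I would handle this by checking directly that the extreme points of $B_{X^\ast}$ are $\pm\delta_n$ together with the extreme points of $B_{Z^\ast}$, and then applying the Lazar–Lindenstrauss criterion: $X$ is Lindenstrauss if and only if the weak$^\ast$ closed faces of its dual ball intersect in the required manner. If the isometric statement fails, a fallback is to renorm $X$ equivalently before applying Step 2.
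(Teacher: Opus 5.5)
Your route is the paper's route: realize the twisted sum, via \cite{amp}, as the pull-back space $X=\{f\in C(L): f|_K\in\iota(Z)\}$ with $L=B_{Z^\ast}\cup\omega$ and the sup norm, and show that $X^\ast$ is isometric to $\ell_1\oplus_1 Z^\ast$. You are also right that the splitting of the dual sequence alone only gives an isomorphic $\mathcal L_\infty$-space.

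However, the proposal never proves Step 2, and Step 2 is the entire content of the theorem. You flag it as ``the main obstacle'', misplace where the difficulty lies, and propose a workaround that does not work as stated. Knowing the extreme points of $B_{X^\ast}$ does not tell you that $X^\ast$ is an $L_1$-space. The Lazar--Lindenstrauss face condition is invoked only ``in the required manner''. And identifying $\operatorname{ext}B_{X^\ast}$ already requires knowing the dual norm, so the detour is circular. The ``fallback renorming'' is not an argument either.

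Your worry is unfounded. How $\omega$ accumulates on $K$ plays no role, and the affineness of $f|_K$ is exactly what barycenters absorb. The missing step takes a few lines.
\begin{itemize}
\item The annihilator is $X^\perp=\{\nu\in M(L):\nu|_\omega=0,\ \Bar(\nu|_K)=0\}$. This uses $c_0\subseteq X$ and the surjectivity of $X\to Z$.
\item Since $\omega$ is open and $K$ is closed, $\|\Lambda-\nu\|=\|\Lambda|_\omega\|_1+\|\Lambda|_K-\nu\|$ for every $\nu\in X^\perp$.
\item Lower bound. The barycenter map is $\iota^\ast$ and $\iota$ is an isometry, so $\|\Lambda|_K-\nu\|\ge\|\Bar(\Lambda|_K-\nu)\|=\|\Bar(\Lambda|_K)\|$. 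This is precisely the ``no cancellation'' claim your sketch asserts without justification.
\item Upper bound. Put $z^\ast=\Bar(\Lambda|_K)$ and take $\nu=\Lambda|_K-\|z^\ast\|\delta_{z^\ast/\|z^\ast\|}\in X^\perp$; if $z^\ast=0$, take $\nu=\Lambda|_K$.
\end{itemize}
Hence $\|[\Lambda]\|=\|\Lambda|_\omega\|_1+\|\Bar(\Lambda|_K)\|$. The map $[\Lambda]\mapsto(\Lambda|_\omega,\Bar(\Lambda|_K))$ is therefore an isometry of $X^\ast$ onto $\ell_1\oplus_1 Z^\ast$, which is an $L_1$-space when $Z$ is Lindenstrauss. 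With this inserted, your Steps 1 and 3 finish the proof exactly as in the paper. The description $Z\cong A_0(K)$ and the symmetrization with pairs $\pm n$ are not needed.
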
 \begin{proof} The key point is that every twisted sum of $c_0$ and a Banach space $X$, not necessarily a $\cC$-space, can be obtained as the lower row of the following diagram \cite{amp}:
		\[ \begin{tikzcd}
	0 \arrow[r] & c_0 \arrow[r] \arrow[d, equal] & C(B_{X^*}\cup \omega) \arrow[r]  & C(B_{X^*}) \arrow[r] & 0 \\
	0 \arrow[r] & c_0 \arrow[r]           & Z \arrow[r]  \arrow[u,hook]         & X \arrow[r]    \arrow[u, hook, "\iota"]      & 0
	\end{tikzcd} \]
	
\noindent In this context, the adjoint operator $\iota^*$ maps every measure $\mu \in M(B_{X^*})$ to its barycenter $\Bar \mu\in X^*$. Now let us look at the exact sequence:
	\[ 0 \longrightarrow Z^\perp \longrightarrow M(B_{X^*}\cup \omega) \longrightarrow Z^* \longrightarrow 0\]
It is clear that $Z^\perp = \{\mu\in M(B_{X^*}\cup \omega): \mu|_{\omega}=0, \Bar\mu|_{B_{X^*}}=0 \}$ and we only need to compute the norm of an element $[\mu]\in Z^*$:
	$$\|[\mu]\| = \inf_{\nu \in Z^\perp} \|\mu-\nu\| = \|\mu|_\omega\| + \inf_{\nu \in Z^\perp} \|(\mu-\nu)|_{B_{X^*}}\|$$
where $\|(\mu-\nu)|_{B_{X^*}}\|\geq\|\Bar(\mu|_{B_{X^*}})\|$ for any $\nu\in Z^\perp$.
Note that every element $x\in B_X$ defines the Dirac measure $\delta_x\in M(B_{X^*})$, and this assignment can be extended homogeneously to all $X$ by letting $Tx = \|x\|\cdot \delta_{x/\|x\|}$.
Therefore, the choice $\nu=\mu|_{B_{X^*}}-T(\Bar \mu)$ attains the minimum, which is $\|{\Bar(\mu|_{B_{X^*}}})\|$.
This yields that $Z^*$ is isometrically isomorphic to $\ell_1 \oplus_1 X^*$, which is an $\mathcal L_1$-space provided $X$ is Lindenstrauss.
\end{proof}

Thus, the question of how close is a twisted sum of $c_0$ and $c_0(\aleph)$ from a $\cC$-space stands. The standard way of obtaining such twisted sums is to form the Nakamura-Kakutani space $C(K_{\cA})$ obtained from an almost disjoint family $\cA$ of subsets of $\omega$ with cardinal at most $\mathfrak c$.
Of course, by our main theorem, there is a twisted sum $X(\con)$ which is not a $\cC$-space, but it does share properties with $\cC$-spaces, since it is isomorphic to a Lindenstrauss space. 
All spaces $C(K_{\mathcal A})$ are subspaces of $\ell_\infty$, and this suggests whether $X(\con)$ is a subspace of $\ell_\infty$ as well. A hint that points us in such direction is: appealing to either Sobczyk's theorem or property $(V)$, we obtain that the quotient map $C(K_{\mathcal A})\longrightarrow c_0(|\mathcal A|)$ is an isomorphism on some copy of $c_0$; that is, it is not $c_0$-singular. However, it follows from being a subspace of $\ell_\infty$ that it is $c_0(\aleph_1)$-singular (i.e., the restrictions of the quotient map to any copy of $c_0(\aleph_1)$ cannot be an isomorphism). One has:

\begin{proposition} $[\mathfrak p = \con]$ The twisted sum $0 \longrightarrow c_0 \longrightarrow X(\con) \longrightarrow c_0(\con) \longrightarrow 0$ 	is $c_0(\aleph_1)$-singular.
\end{proposition}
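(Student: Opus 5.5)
The plan is to reduce the statement to the well-known fact that a weakly compactly generated subspace of $\ell_\infty$, or more generally of a space whose dual ball is $weak^\ast$ separable, must be separable. Throughout, $X(\con)$ denotes the space of Theorem \ref{main} with $K=\bA(\con)$, so that $C(K)=c_0(\con)$ and $X(\con)\sub C(L)$ with $L=M_1(K)\cup\omega$. Let $p\colon X(\con)\to c_0(\con)$ be the quotient map, and suppose towards a contradiction that $Y\sub X(\con)$ is isomorphic to $c_0(\aleph_1)$ and $p|Y$ is an isomorphism.

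\textbf{Step 1: reduce to $\aleph_1$ coordinates.} Since $p|Y$ is an isomorphism, $Y\cap c_0=0$ and $Y+c_0$ is closed, so $W=Y+c_0\cong c_0\oplus c_0(\aleph_1)$ is WCG and nonseparable. The space $p(Y)$ lives inside $c_0(\Gamma_0)$ for some $\Gamma_0\sub\con$ with $|\Gamma_0|=\aleph_1$, namely the union of the countable supports of $p$ of a generating family of $Y$. Let $K_0\sub K$ be the copy of $\bA(\Gamma_0)$. The pull-back of our sequence along $c_0(\Gamma_0)\hookrightarrow c_0(\con)$ is the subspace $Z_0=\{f\in X(\con): p(f)\in c_0(\Gamma_0)\}$. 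It contains $W$ and is determined by the restriction of $f$ to $\omega\cup M_1(K_0)\sub L$, where $M_1(K_0)$ sits inside $M_1(K)$. Note that $M_1(K_0)$ has weight $\aleph_1<\con$.

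\textbf{Step 2: show that $B_{Z_0^\ast}$ is $weak^\ast$ separable.} I want a countable set of functionals that norms $Z_0$ up to a constant. The natural candidates are the $\delta_n$, $n\in\omega$, together with countably many measures on $M_1(K_0)$. Since $|\Gamma_0|<\con$, a function in $Z_0$ is controlled by its values on $\omega$ together with its values on the set $P=M_1(K_0)\cap\overline{\omega}^L$.

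\textbf{Step 3: handle the part of $M_1(K_0)$ missed by $\overline{\omega}$.} A function $f\in Z_0$ is determined, modulo its values on $\omega$, by $\wt g$ for $g=p(f)\in c_0(\Gamma_0)$, that is, by countably many coordinates of $g$. The danger lies only in points of $M_1(K_0)$ outside $\overline{\omega}^L$. On the corresponding sub-sum of $c_0(\Gamma_0)$ the extension behaves like a trivial one, since no $\omega$-points accumulate there, and I would split this part off. The gluing in Theorem \ref{main} uses only the points $\pm\sigma_\xi$ and $0$, and only $\aleph_1$ of the $\sigma_\xi$ involve $\Gamma_0$. I therefore expect that, using $\pe=\con$ (compacta of weight $<\con$ are sequentially compact, and towers of length $\aleph_1$ have pseudo-intersections), one can find countably many $\omega$-sequences whose $weak^\ast$ limits in $L$ norm the part of $Z_0$ not already normed by the $\delta_n$.

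Granting Step 3, $B_{Z_0^\ast}$ is $weak^\ast$ separable. Then $W$, being a subspace of $Z_0$, also has a $weak^\ast$ separable dual ball. But a WCG space whose dual ball is $weak^\ast$ separable is separable, because its dual ball is $weak^\ast$ Eberlein and separable Eberlein compacta are metrizable. This contradicts $W\supseteq c_0(\aleph_1)$.

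The main obstacle is Step 3: controlling the points of $M_1(K_0)$ not in the closure of $\omega$. This is exactly where the specific almost disjoint family from Theorem \ref{adf2} and the hypothesis $\pe=\con$ must be used. My first attempt would be to show directly that every $\sigma_\xi$ with $\xi$ relevant to $\Gamma_0$ is a limit of a sequence from $\omega$, and that the remaining points contribute only a complemented trivial summand.
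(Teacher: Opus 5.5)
\textbf{There is a genuine gap.} Your closing argument is fine: a WCG space with weak$^\ast$ separable dual ball is separable, and you could apply it to $Y$ directly. But the whole proof rests on Step 3, which you only expect to hold, and the mechanism you sketch for it cannot work.

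First, limits in $L$ of sequences from $\omega$ are points $t\in\overline{\omega}^L$. At such points $|f(t)|\le\sup_n|f(n)|$ automatically, so they add no norming power beyond the $\delta_n$.

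Second, splitting off a ``trivial summand'' living where $\omega$ does not accumulate is self-defeating. Suppose $g\in c_0(\con)$ has $\wt g|_\Sigma=0$. Put $f=\wt g$ on $M_1(K)$ and $f=0$ on $\omega$. Then $f$ is continuous on $L$ (paste along the closed sets $M_1(K)$ and $\omega\cup\Sigma$), so $g\mapsto f$ is an isometric linear lift. A nonseparable summand of this kind would therefore be a copy of $c_0(\aleph_1)$ on which $p$ is an isomorphism, i.e.\ a counterexample rather than a step of the proof.

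Third, you misplace the hypotheses. Nothing about $\cA$ beyond the gluing, and nothing from $\pe=\con$ beyond the existence of $X(\con)$, is needed here. Moreover $\pe=\con$ does not give $\aleph_1<\con$, since CH implies $\pe=\con$. So ``weight $<\con$'' and pseudo-intersections of $\aleph_1$-towers are not available to you.

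\textbf{The missing observation} is that $\overline{\omega}^L\cap M_1(K)=q(K_\cA')=\Sigma$, and that $\Sigma$ detects each coordinate. Indeed, $\sigma_\xi$ is glued to $x(A^1_\xi)$, so it is a limit of points of $A^1_\xi\sub\omega$, and $\wt{e_\xi}(\sigma_\xi)=\sigma_\xi(e_\xi)=\frac12$.

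The paper then argues by pigeonhole. Take a lifting $E$ of the inclusion $c_0(I)\hookrightarrow c_0(\con)$ and put $f_\xi=E(e_\xi)$. Continuity at $\sigma_\xi$ yields $n_\xi\in\omega$ with $f_\xi(n_\xi)\ge\frac14$. Some $n$ equals $n_\xi$ for uncountably many $\xi$. For any $m$ of these, $\|\sum_k e_{\xi_k}\|=1$ while $\|E(\sum_k e_{\xi_k})\|\ge\sum_k f_{\xi_k}(n)\ge m/4$, contradicting boundedness of $E$.

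Within your own framework the same observation repairs Step 2 at once. For $\sigma_\xi=\frac12(\delta_\xi-\delta_\infty)$ one gets $\sup_n|f(n)|\ge\sup_\Sigma|\wt{p(f)}|=\frac12\|p(f)\|$, hence $\|f\|\le 2\sup_n|f(n)|$ on $p^{-1}(c_0(\con))$. So this space embeds in $\ell_\infty$, and your WCG argument concludes without any Step 3.
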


\begin{proof}
Assume this is not the case; that is, there is an into operator $j: c_0(I) \hookrightarrow c_0(\con)$ with $|I|=\aleph_1$ such that admits a lifting $E:c_0(\aleph_1) \to X(\con)$, as shown in the diagram.
	\[ \begin{tikzcd}
		0 \arrow[r] & c_0 \arrow[r] \arrow[d, equal] & X(\con) \arrow[r, "q"]  & c_0(\con) \arrow[r] & 0 \\
		0 \arrow[r] & c_0 \arrow[r]           & PB \arrow[r]  \arrow[u,hook]         & c_0(\aleph_1) \arrow[r]    \arrow[u, hook, "j"]  \arrow[ul, "E"]      & 0
	\end{tikzcd} \]

\par For every $\xi\in I$, let $e_\xi$ be the characteristic function of the singleton $\{\xi\}$, and pick $f_\xi\in X(\con)$ such that $q(f_\xi)=e_\xi$. Note that $f_\xi(\sigma_\xi)=\frac12$, hence, by density of $\omega$ in the countable discrete extension $M_1(\bA(\con)) \cup \omega$, there is $n_\xi\in \omega$ such that $f_\xi(n_\xi)\geq \frac14$.
Now, the cardinality of $I$ yields the existence of an uncountable subset $J\subset I$ and an element $n\in \omega$ such that $n=n_{\xi}$ for every $\xi\in J$; that is, $n$ has been chosen uncountably many times.
Then, for every finite collection $\xi_1, ..., x_m\in J$, we obtain
	$$\left\|\sum_{k=1}^m e_{\xi_k}\right\|=1 \quad , \quad
	  \left\|\sum_{k=1}^m f_{\xi_k}\right\| \geq \frac{m}{4}$$
thus contradicting continuity of $E$.
\end{proof}

Finally, it was observed in \cite[Proposition 4.2]{CCMPS19} that under {\sf MA}($\aleph$) and $\aleph <\mathfrak c$ every twisted sum of $c_0$ and $c_0(\aleph)$ is a quotient of a $\cC$-space. It therefore makes sense to ask whether $X(\con)$ is a quotient of a $\cC$-space. The reverse way can also be followed: we can ask if it is true that under {\sf MA}($\aleph$) and $\aleph <\mathfrak c$ every twisted sum of $c_0$ and $c_0(\aleph)$ is a $\cC$-space.

\section*{Acknowledgements}
Thanks are due to an attentive referee, who spotted a mistake in an earlier version of the main theorem. The authors would also like to thank Jesús M. F. Castillo for his valuable suggestions and remarks about the content of the paper.

\section*{Conflict of interest}
The authors declare that they have no conflict of interest.

\section*{Data availability}
Data sharing not applicable to this article as no datasets were generated or analysed during the current study.


\begin{thebibliography}{99}
\bibitem{amir-lin} D. Amir and J. Lindenstrauss,
{\em The structure of weakly compact sets in Banach spaces}, Ann. Math. 88 (1968), 35--46.

\bibitem{2132} A. Avil\'es, F. Cabello, J. M. F. Castillo, M. Gonz\'alez, Y. Moreno, \emph{ Separable injective Banach spaces}  Lecture Notes in Mathematics 2132 (2016) Springer-Verlag.



\bibitem{amp} A. Avil\'es, W. Marciszewski, G. Plebanek, \emph{Twisting $c_0$ around nonseparable Banach spaces}, arxiv 1902.07783v1. Advances in Mathematics 369 (in press)


\bibitem{ben} Y. Benyamini, \emph{An $M$-space which is not isomorphic to a $C(K)$ space,} Israel J.
Math, 28 (1977) 98--102.


\bibitem{hmbst}F.\ Cabello S\'anchez, J.M.F.\ Castillo, \emph{Homological Methods in Banach Space Theory}, Cambrdge Studies in Advances Mathematics, Cambridge Univ. Press, to appear.
	
\bibitem{CCMPS19}F.\ Cabello S\'anchez, J.M.F.\ Castillo, W.\ Marciszewski, G.\ Plebanek, A.\ Salguero Alarc\'on,
	{\em Sailing over three problems of Koszmider}, J. Funct. Anal. 279 (2020), no. 4


\bibitem{ccky} F. Cabello S\'{a}nchez, J.M.F. Castillo, N.J. Kalton and
D.T. Yost, \emph{Twisted sums with $C(K)$ spaces,} Trans. Amer.
Math. Soc. 355 (2003) 4523-4541.
	
\bibitem{ccy}
F. Cabello S\'anchez, J.M.F. Castillo, and D. Yost,
\emph{Sobczyk's theorems from A to B}, Extracta Math. 15 (2000)
391--420.

\bibitem{Ca16}
J.M.F.\ Castillo, {\em Nonseparable $C(K)$-spaces can be twisted when K is a finite height compact},  Topology Appl.\ 198 (2016), 107--116.

\bibitem{3space}
J.M.F.\ Castillo, M. Gonz\'alez, {\em Three space Problems in Banach Space Theory},  Lecture Notes in Mathematics 1667 (1997) Springer-Verlag.

%\bibitem{cms}  J.M.F. Castillo, D. Morales, J. Suárez de la Fuente; \emph{Derivation of vector-valued complex interpolation scales}, J. Math. Anal. Appl. 468 (2018) 461-472


\bibitem{efe}  J.M.F. Castillo, P.L. Papini, \emph{Epheastus account on Trojanski's polyhedral war}, Extracta Math. Vol. 29 (2014) 35-51.


\bibitem{v} J.M.F. Castillo, M. Simoes, \emph{Property $(V)$ still fails to be a $3$-space property},  Extracta Math. 27 (2012) 5-11.

\bibitem{Co18} C.\ Correa, {\em Nontrivial twisted sums for finite height spaces under Martin's Axiom}, Fund. Math., 248 (2020) 195-204.

\bibitem{CT16}
C.\ Correa and D.V.\ Tausk, {\em  Nontrivial twisted sums of $c_0$ and $C(K)$},  J.\ Funct.\ Anal.\ 270 (2016),  842--853.

\bibitem{DJT} D.\ Diestel, H. Jarchow and A. Tonge, {\em Absolutely summing operators},  Math.\ Surveys 15, AMS (1977).

\bibitem{DU77} D.\ Diestel, J.J\ Uhl, {\em Vector measures},  Math.\ Surveys 15, AMS (1977).

\bibitem{Fr84}
D.H.\ Fremlin,  {\em Consequences of Martin's axiom},
Cambridge Tracts in Mathematics  84, Cambridge University Press, Cambridge (1984).

\bibitem{kaltCC} N.J. Kalton, \emph{Lipschitz and uniform embeddings into $\ell_\infty$}, Fundamenta Math. 212 (2011) 53--69.

\bibitem{maple}  W.\ Marciszewski and G.\ Plebanek, \emph{Extension operators and twisted sums of $c_0$ and
	$C(K)$ spaces}, J. Func. Anal. 274 (2018) 1491--1529.
	
\bibitem{Ne84}
S.\ Negrepontis, {\em Banach spaces and topology}; in: Handbook of set-theoretic topology, 1045--1142,
North-Holland, Amsterdam (1984).

\bibitem{Ro03}
H.\ Rosenthal, {\em  The Banach spaces $C(K)$},
Handbook of the geometry of Banach spaces, Vol. 2, 1547--1602, North-Holland, Amsterdam, 2003.

\end{thebibliography}
\end{document}